\begin{document}

\title{О НЕРАВЕНСТВЕ ТИПА ВЯЙСЯЛЯ ДЛЯ УГЛОВОЙ ДИЛАТАЦИИ ОТОБРАЖЕНИЙ И НЕКОТОРЫХ ЕГО ПРИЛОЖЕНИЯХ}

\author{Евгений А. Севостьянов, Руслан Р. Салимов}

\date{16.03.2014}

\theoremstyle{plain}
\newtheorem{theorem}{Теорема}[section]
\newtheorem{lemma}{Лемма}[section]
\newtheorem{proposition}{Предложение}[section]
\newtheorem{corollary}{Следствие}[section]
\newtheorem{definition}{Определение}[section]
\theoremstyle{definition}
\newtheorem{example}{Пример}[section]
\newtheorem{remark}{Замечание}[section]
\newcommand{\keywords}{\textbf{Key words.  }\medskip}
\newcommand{\subjclass}{\textbf{MSC 2000. }\medskip}
\renewcommand{\abstract}{\textbf{Аннотация.  }\medskip}
\numberwithin{equation}{section}

\maketitle

\medskip
\subjclass{Primary 30C65; Secondary 30C62, 31A15, 32U20}

\keywords{Отображения с ограниченным и конечным искажением,
устранение изолированных особенностей, модули семейств кривых}

\begin{abstract}
Для одного подвида отображений с конечным искажением $f:D\rightarrow
D^{\prime},$ $D, D^{\prime}\subset{\Bbb R}^n,$ $n\ge 2,$ допускающих
наличие точек ветвления, установлено некоторое модульное
неравенство, играющее существенную роль при исследовании различных
проблем плоских и пространственных отображений. В качестве одного из
приложений полученных результатов исследован вопрос об устранении
изолированной особенности открытых дискретных отображений с конечным
искажением длины.
\end{abstract}

\section{Введение}
Настоящая статья посвящена изучению отображений с конечным
искажением, активно изучаемых в последнее время (см., напр.,
\cite{AC$_1$}, \cite{BGMV}, \cite{Cr$_1$}, \cite{GG}, \cite{GRSY},
\cite{IM}, \cite{MRSY} и \cite{MRSY$_1$}). Речь идёт, прежде всего,
об отображениях с конечным искажением длины, являющихся некоторым
подвидом отображений с конечным искажением и содержащим в себе класс
отображений с ограниченным искажением по Решетняку (см.
\cite{Re$_1$} и \cite{Ri}). В данной работе устанавливается
некоторый аналог результатов публикации \cite{Va$_1$}, где для
отображений с ограниченным искажением вначале было доказано ключевое
модульное неравенство, а затем на его основе решён вопрос об
устранении изолированной особенности отображений. Здесь та же
проблема решена для отображений с конечным искажением длины, при
этом, накладываемые условия относятся к специально подобранному
дилатационному коэффициенту, имеющему локальный характер (что, в
частности, обобщает результаты, опубликованные одним из авторов
ранее, см. \cite{Sev$_1$} и \cite{Sev$_2$}). Отметим, что
отображения с конечным искажением длины введены О. Мартио совместно
с В. Рязановым, У. Сребро и Э. Якубовым (\cite{MRSY$_1$}) и
представляют собой одно из обобщений отображений с ограниченным
искажением по Решетняку (см. \cite{Re$_1$} и \cite{Ri}). Отображения
с конечным искажением длины могут быть определены как отображения,
искажающие евклидово расстояние в конечное число раз в почти всех
точках, а также обладающие $N$-свойством Лузина относительно меры
Лебега в ${\Bbb R}^n$ и меры длины на кривых в прямую и обратную
стороны (см. там же).

\medskip
Опишем вкратце цель исследований настоящей статьи. Всюду далее $D$
-- область в ${\Bbb R}^n,$ $n\ge 2,$ $m$ -- мера Лебега ${\Bbb
R}^n,$ ${\rm dist\,}(A,B)$ -- ев\-кли\-дово расстояние между
множествами $A, B\subset {\Bbb R}^n,$ $(x,y)$ обозначает
(стандартное) скалярное произведение векторов $x,y\in {\Bbb R}^n,$
$B(x_0, r)=\left\{x\in{\Bbb R}^n: |x-x_0|< r\right\},$ ${\Bbb B}^n
:= B(0, 1),$ $S(x_0,r) = \{ x\,\in\,{\Bbb R}^n : |x-x_0|=r\},$
${\Bbb S}^{n-1}:=S(0, 1),$ $\omega_{n-1}$ означает площадь сферы
${\Bbb S}^{n-1}$ в ${\Bbb R}^n,$ $\Omega_{n}$ -- объём единичного
шара ${\Bbb B}^{n}$ в ${\Bbb R}^n,$ $f^{\,\prime}(x)$ обозначает
матрицу Якоби отображения $f$ в точки $x\in D$ её
дифференцируемости, запись $f:D\rightarrow {\Bbb R}^n$ предполагает,
что отображение $f,$ заданное в области $D,$ непрерывно. Отображение
$f:D\rightarrow {\Bbb R}^n$ называется {\it дискретным}, если
прообраз $f^{-1}\left(y\right)$ каждой точки $y\,\in\,{\Bbb R}^n$
состоит из изолированных точек, и {\it открытым}, если образ любого
открытого множества $U\subset D$ является открытым множеством в
${\Bbb R}^n.$ Будем говорить, что отображение $f:D\rightarrow {\Bbb
R}^n$ обладает {\it $N$--свой\-ст\-вом Лузина,} или просто
$N$--свой\-ст\-вом, если из условия $m(E)=0,$ $E\subset D,$ следует,
что $m(f(E))=0.$ Аналогично, говорят, что отображение
$f:D\rightarrow {\Bbb R}^n$ обладает {\it $N^{-1}$--свой\-ст\-вом,}
если из условия $m(E)=0,$ $E\subset {\Bbb R}^n,$ следует, что
$m\left(f^{-1}(E)\right)=0,$ где, как обычно, запись $f^{-1}(E)$
обозначает полный прообраз множества $E$ при отображении $f.$ Везде
ниже мы подразумеваем, что отображение $f$ сохраняет ориентацию,
т.е., топологическая степень отображения $\mu (y, f, G)>0$ для всех
указанных выше $y$ и $G,$ если не оговорено противное. Пусть
$f:D\rightarrow {\Bbb R}^n$ -- произвольное отображение и пусть
существует область $G\subset D,$ $\overline{G}\subset D,$ такая, что
$\overline{G}\cap f^{\,-1}\left(f(x)\right)=\left\{x\right\}.$ Тогда
величина $\mu\, \left(f(x), f, G\right),$ называемая {\it локальным
топологическим индексом},  не зависит от выбора области $G$ и
обозначается символом $i(x,f).$ Напомним, что точка $x_0\in D$
называется {\it точкой ветвления отображения $f$}, если ни в какой
окрестности $U$ точки $x_0$ сужение $f|_U$ не является
гомеоморфизмом. Множество точек ветвления отображения $f$ принято
обозначать символом $B_f.$ Очевидно, если $x_0\in D\setminus B_f$ и
$f$ -- сохраняющее ориентацию открытое дискретное отображение, то
$i(x_0, f)=1.$

\medskip Здесь и далее {\it кривой} $\gamma$ мы называем непрерывное
отображение отрезка $[a,b]$ (открытого интервала $(a,b),$ либо
полуоткрытого интервала $[a,b)$ или $(a,b]$) в ${\Bbb R}^n,$
$\gamma:[a,b]\rightarrow {\Bbb R}^n.$ Под семейством кривых $\Gamma$
подразумевается некоторый фиксированный набор кривых $\gamma,$ а
$f(\Gamma)=\left\{f\circ\gamma|\gamma\in\Gamma\right\}.$ Борелева
функция $\rho:{\Bbb R}^n\,\rightarrow [0,\infty] $ называется {\it
допустимой} для семейства $\Gamma$ кривых $\gamma$ в ${\Bbb R}^n,$
если соотношение
\begin{equation}\label{eq31*}
\int\limits_{\gamma}\rho (x)\ \ |dx|\ge 1
\end{equation}
выполнено для всех кривых $ \gamma \in \Gamma.$ В этом случае мы
пишем: $\rho \in {\rm adm} \,\Gamma.$ {\it Модулем} семейства кривых
$\Gamma $ называется величина
$$M(\Gamma)=\inf_{\rho \in \,{\rm adm}\,\Gamma}
\int\limits_D \rho ^n (x)\ \ dm(x)\,.$$ Свойства модуля в некоторой
мере аналогичны свойствам меры Лебега $m$ в ${\Bbb R}^n.$ Именно,
модуль пустого семейства кривых равен нулю, $M(\varnothing)=0,$
модуль обладает свойством монотонности относительно семейств кривых
$\Gamma_1$ и $\Gamma_2:$ $\Gamma_1\subset\Gamma_2\Rightarrow
M(\Gamma_1)\le M(\Gamma_2),$ а также свойством полуаддитивности:
$M\left(\bigcup\limits_{i=1}^{\infty}\Gamma_i\right)\le
\sum\limits_{i=1}^{\infty}M(\Gamma_i)$ (см. \cite[теорема~6.2]{Va}).
Говорят, что семейство кривых $\Gamma_1$ \index{минорирование}{\it
минорируется} семейством $\Gamma_2,$ пишем $\Gamma_1\,>\,\Gamma_2,$
если для каждой кривой $\gamma\,\in\,\Gamma_1$ существует подкривая,
которая принадлежит семейству $\Gamma_2.$
В этом случае,
\begin{equation}\label{eq32*A}
\Gamma_1
> \Gamma_2 \quad \Rightarrow \quad M(\Gamma_1)\le M(\Gamma_2)
\end{equation} (см. \cite[теорема~6.4, гл.~I]{Va}).
Говорят, что некоторое свойство выполнено для {\it почти всех (п.в.)
кривых} области $D$, если оно имеет место для всех кривых, лежащих в
$D$, кроме некоторого их семейства, модуль которого равен нулю.
Пусть $\Delta \subset \Bbb R$ -- открытый интервал числовой прямой,
$\gamma: \Delta\rightarrow {\Bbb R}^n$ -- локально спрямляемая
кривая. В таком случае, очевидно, существует единственная
неубывающая функция длины $l_{\gamma}:\Delta\rightarrow
\Delta_{\gamma}\subset \Bbb{R}$ с условием $l_{\gamma}(t_0)=0,$ $t_0
\in \Delta,$ такая что значение $l_{\gamma}(t)$ равно длине
подкривой $\gamma\mid_{[t_0, t]}$ кривой $\gamma,$ если $t>t_0,$ и
$-l\left(\gamma\mid_ {[t,\,t_0]}\right),$ если $t<t_0,$ $t\in
\Delta.$ Пусть $g:|\gamma|\rightarrow {\Bbb R}^n$ -- непрерывное
отображение, где $|\gamma| = \gamma(\Delta)\subset \Bbb{R}^n.$
Предположим, что кривая $\widetilde{\gamma}=g\circ \gamma$ также
локально спрямляема. Тогда, очевидно, существует единственная
неубывающая функция $L_{\gamma,\,g}:\,\Delta_{\gamma} \rightarrow
\Delta_{\widetilde{\gamma}}$ такая, что
$L_{\gamma,\,g}\left(l_{\gamma}\left(t\right)
 \right)\,=\,l_{\widetilde{\gamma}}\left(t\right)$ при всех
$t\in\Delta.$ Кривая $\gamma \in D$ называется {\it (полным)
поднятием кривой $\widetilde{\gamma}\in {\Bbb R}^n$ при отображении
$f:D \rightarrow {\Bbb R}^n,$} если $\widetilde{\gamma}=f \circ
\gamma.$

Говорят, что отображение $f:D\rightarrow {\Bbb R}^n$ принадлежит
классу $ACP$ в области $D,$ пишем $f\in ACP,$ если, для почти всех
кривых $\gamma$ в области $D,$ кривая
$\widetilde{\gamma}=f\circ\gamma$ локально спрямляема и функция
длины $L_{\gamma,\,f},$ введённая выше, абсолютно непрерывна на всех
замкнутых интервалах, лежащих в $\Delta_{\gamma},$ для почти всех
кривых $\gamma$ в $D.$ Предположим, что $f:D\rightarrow {\Bbb R}^n$
-- дискретное отображение, тогда может быть определена функция
$L^{-1}_{\gamma,\,f}.$ В таком случае, будем говорить, что $f$
обладает {\it свойством $ACP^{\,-1}$} в области $D,$ пишем $f\in
ACP^{-1},$ если для почти всех кривых $\widetilde{\gamma}\in f(D)$
каждое поднятие $\gamma$ при отображении $f,$
$f\circ\gamma=\widetilde{\gamma},$ является локально спрямляемой
кривой и, кроме того, обратная функция $L^{-1}_{\gamma,\,f}$
абсолютно непрерывна на всех замкнутых интервалах, лежащих в
$\Delta_{\widetilde{\gamma}},$ для почти всех кривых
$\widetilde{\gamma}$ в $f(D)$ и каждого поднятия $\gamma$ кривой
$\widetilde{\gamma}=f\circ\gamma.$

Пусть $f:D\rightarrow {\Bbb R}^n$ дискретное отображение, тогда $f$
будем называть {\it отображением с конечным искажением длины,} пишем
$f\in FLD,$ если $f$ дифференцируемо почти всюду в $D,$ $f$ обладает
$N$ и $N^{-1}$--свой\-ст\-ва\-ми, и, кроме того, $f\in ACP\cap
ACP^{-1}.$

\medskip
Здесь и далее {\it внутренняя дилатация} $K_I(x,f)$ отображения $f$
в точке $x$ определяется при $J(x, f)\ne 0$ отношением
\begin{equation}\label{equa16}
K_I(x,f)\quad=\quad\frac{|J(x,f)|}{{l\left(f^{\,\prime}(x)\right)}^n}\,,
\end{equation}
где $J(x,f)={\rm det\,}f^{\,\prime}(x)$ обозначает якобиан
отображения $f$ в точке $x,$ а
$l\left(f^{\,\prime}(x)\right):=\min\limits_{|h|=1}
|f^{\,\prime}(x)h|.$ Полагаем $K_I(x,f)=1,$ если
$f^{\,\prime}(x)=0,$ и $K_I(x,f)=\infty.$

\medskip В дальнейшем для кривых $\alpha:J\rightarrow {\Bbb R}^n$ и $\beta:I\rightarrow {\Bbb
R}^n$ и отрезков $J\subset I\subset {\Bbb R}$  запись $\alpha\subset
\beta$ означает, что $\beta|_{J}=\alpha,$ т.е., что кривая $\alpha$
является подкривой кривой $\beta.$ Пусть $f:D\rightarrow {\Bbb R}^n$
-- отображение с ограниченным искажением (см. \cite{Re$_1$} и
\cite{Ri}), $\Gamma$ -- семейство кривых в $D,$ $\Gamma^{\,\prime}$
-- семейство кривых в ${\Bbb R}^n$ и $m$ -- натуральное число, такое
что выполнено следующее условие. Для каждой кривой $\beta\in
\Gamma^{\,\prime}$ найдутся кривые $\alpha_1,\ldots,\alpha_m$
семейства $\Gamma$ такие что $f\circ \alpha_j\subset \beta$ для всех
$j$ и равенство $\alpha_j(t)=x$ имеет место при всех $x\in D,$ всех
$t$ и не более чем $i(x,f)$ индексах $j$ (где $i(x, f)$ -- локальный
топологический индекс отображения $f$ в точке $x$). Тогда
\begin{equation}\label{equa8}
M(\Gamma^{\,\prime} )\quad\le\quad\frac{1}{m}\,\cdot \,M(\Gamma)
\end{equation}
(см. \cite[теорема~3.1]{Va$_1$} либо \cite[разд.~9, гл.~II]{Ri}).
Неравенство (\ref{equa8}) установлено Ю.~Вяйсяля в \cite{Va$_1$} и
играет существенную роль в исследовании проблемы об изолированной
особенности, а также теории распределения значений (см.
\cite{Va$_1$} и \cite{Ri}). Одним из авторов данной работы был
установлен некоторый аналог неравенства типа (\ref{equa8}) для
отображений с конечным искажением длины (см. \cite{Sev$_1$}), а
именно, было показано, что для открытых дискретных отображений
$f:D\rightarrow {\Bbb R}^n$ с конечным искажением длины вместо
неравенства (\ref{equa8}) имеет место соотношение
\begin{equation}\label{equa1}
M(\Gamma^{\,\prime} )\quad\le\quad \frac{1}{m}\quad\int\limits_D
K_I(x,\,f)\cdot \rho^n(x)\, dm(x)\,,\end{equation}
%
%
выполненное для любого семейства $\Gamma $ путей $\gamma$ в $D$ и
для каждой $\rho \in {\rm }\,{\rm adm}\,\Gamma.$

\medskip
Отметим, что в ряде важных для приложений случаев неравенство
(\ref{equa1}) может оказаться несколько грубым, поскольку для
исследования отдельных вопросов возможно использование лишь
локальных ограничений в окрестности заданной точки, а не оценок,
задействующих всевозможные семейства кривых (и являющихся, таким
образом, ограничениями глобального характера). Соответствующие
примеры, построенные в заключительной части работы, указывают на
упомянутые отличия.

Настоящая работа, прежде всего, посвящена установлению некоторых
локальных оценок, аналогичных (\ref{equa1}). Уточнение
(\ref{equa1}), которое будет здесь проделано, касается двух
отдельных $"$направлений$"$: с одной стороны, вместо произвольных
семейств кривых в (\ref{equa1}) будут рассматриваться лишь кривые,
соединяющие обкладки сферического кольца с центром в данной точке; с
другой стороны, вместо величины $K_I(x, f)$ будет участвовать другая
величина, не превосходящая $K_I(x, f)$ и также имеющая локальный
характер (зависит от конкретной точки $x_0,$ в окрестности которой и
рассматривается основное неравенство (\ref{equa1})).

\medskip
Для этой цели рассмотрим также следующие определения и обозначения.
Пусть $x_0\in {\Bbb R}^n,$ тогда для отображения $f:D\rightarrow
{\Bbb R}^n$ полагаем в точке дифференцируемости $x\in D$
$$l_f(x, x_0)=\min\limits_{|h|=1}\frac{|\partial_h f(x)|}{\left|\left(h, \frac{x-x_0}{|x-x_0|}\right)\right|}\,,$$
где $\partial_h f(x)=\lim\limits_{t\rightarrow
+0}\frac{f(x+th)-f(x)}{t}$ -- производная по направлению $h$
отображения $f$ в точке $x.$ Введём в рассмотрение следующую
величину, называемую {\it угловой дилатацией в точке $x$ по
отношению к точке $x_0\in D,$} полагая в точке $x$
дифференцируемости и невырожденности отображения $f$
\begin{equation}\label{equa11}
D_f(x, x_0)=\frac{|J(x ,f)|}{l_f^n(x, x_0)}\,.
\end{equation}
При этом, величину $D_f(x, x_0)$ полагаем равной единице в точках
$x,$ где $f^{\,\prime}(x)=0$ и бесконечности, если $J(x, f)=0,$ но,
в то же время, $J(x, f)\ne 0.$ Отметим, что
$
D_f(x, x_0)\le K_I(x, f)
$
во всех точках $x,$ так как $\frac{1}{l_f(x, x_0)}\le
\frac{1}{l(f^{\,\prime}(x))}.$

Обозначим $A(r_1, r_2, x_0)=\{x\in {\Bbb R}^n: r_1<|x-x_0|<r_2\}$
при произвольных $0<r_1<r_2<\infty.$ Далее символ $\Gamma(E,F,D)$
означает семейство всех кривых $\gamma:[a,b]\rightarrow{\Bbb R}^n,$
которые соединяют $E$ и $F$ в $D,$ т.е. $\gamma(a)\in E,$
$\gamma(b)\in F$ и $\gamma(t)\in D$ при $t\in (a, b).$ Одним из
наиболее важных результатов настоящей работы является следующее
утверждение, установленное в частном случае $m=1$ для гомеоморфизмов
класса Соболева $f\in W_{loc}^{1, 2},$ $f^{\,-1}\in W_{loc}^{1, 2}$
при $n=2$ в работе \cite[теорема~2.17]{RSY} и для квазиконформных
отображений при $n\ge 2$ в работе \cite[Лемма~2.4]{GG}.

\begin{theorem}\label{th3.1}{\sl
Пусть $f:D\rightarrow {\Bbb R}^n$ -- открытое дискретное
дифференцируемое почти всюду отображение, обладающее $N,$ $N^{\,-1}$
и $ACP^{\,-1}$-свойствами, $x_0\in D,$ $A(r_1, r_2, x_0)\subset D,$
$\Gamma^{\,\prime}$ -- некоторое семейство кривых в ${\Bbb R}^n$ и
$m$ -- натуральное число, такое что выполнено следующее условие. Для
каждой кривой $\beta\in \Gamma^{\,\prime}$ найдутся кривые
$\alpha_1,\ldots,\alpha_m$ семейства $\Gamma(S(x_0, r_1), S(x_0,
r_2), A(r_1, r_2, x_0))$ такие что $f\circ \alpha_j\subset \beta$
для всех $j$ и равенство $\alpha_j(t)=x$ имеет место при всех $x\in
D,$ всех $t$ и не более чем $i(x,f)$ индексах $j.$ Тогда
\begin{equation}\label{eq5}
M(\Gamma^{\,\prime} )\quad\le\quad\frac{1}{m}\quad\int\limits_D
D_f(x, x_0)\cdot \rho^n (x)\, dm(x)
\end{equation}
для каждой неотрицательной измеримой по Лебегу функции $\rho:[r_1,
r_2]\rightarrow {\Bbb R}$ такой, что
$\int\limits_{r_1}^{r_2}\rho(r)dr\ge 1.$}
\end{theorem}

\begin{corollary}\label{cor1}
{\sl Заключение теоремы \ref{th3.1} выполнено, если отображение $f$
в условиях этой теоремы является отображением с конечным искажением
длины.}
\end{corollary}

\medskip
Всюду далее $q_{x_0}(r)$ означает среднее интегральное значение
$Q(x)$ над сферой $S(x_0, r),$
\begin{equation}\label{eq17}
q_{x_0}(r):=\frac{1}{\omega_{n-1}r^{n-1}}\int\limits_{S(x_0, r)}
Q(x)\,d{\cal H}^{n-1}\,,
\end{equation}
где ${\cal H}^{n-1}$ -- $(n-1)$-мерная мера Хаусдорфа.
Будем говорить, что функция ${\varphi}:D\rightarrow{\Bbb R}$ имеет
{\it конечное среднее колебание} в точке $x_0\in D$, пишем
$\varphi\in FMO(x_0),$ если
${\limsup\limits_{\varepsilon\rightarrow 0}}\
\frac{1}{\Omega_n\cdot\varepsilon^n} \int\limits_{B( x_0,
\varepsilon)} |{\varphi}(x)-\overline{{\varphi}}_{\varepsilon}|\
dm(x)<\infty,$
где $\overline{{\varphi}}_{\varepsilon}=
\frac{1}{\Omega_n\cdot\varepsilon^n}\int\limits_{B(
x_0,\,\varepsilon)} {\varphi}(x)\ dm(x).$
Функции с конечным средним колебанием введены А. Игнатьевым и В.
Рязановым в работе \cite{IR}, см. также \cite[разд.~11.2]{MRSY}, и
представляют собой обобщение функций $BMO,$ ограниченного среднего
колебания по Ф. Джону - Л. Ниренбергу \cite{JN}. Напомним, что
изолированная точка $x_0$ границы $\partial D$ области $D$
называется {\it устранимой} для отображения $f,$ если существует
конечный предел $\lim\limits_{x\rightarrow x_0}\,f(x).$ Если
$f(x)\rightarrow \infty$ при $x\rightarrow x_0,$ точку $x_0$ будем
называть {\it полюсом.} Изолированная точка $x_0$ границы $\partial
D$ называется {\it существенно особой точкой} отображения
$f:D\rightarrow {\Bbb R}^n,$ если при $x\rightarrow x_0$ нет ни
конечного, ни бесконечного предела. В качестве приложений теоремы
\ref{th3.1}, приведём следующие результаты.

\begin{theorem}\label{th1} {\sl Пусть $b\in D$ и $f:D\setminus\{b\}\rightarrow {\Bbb
R}^n$ -- открытое дискретное дифференцируемое почти всюду
отображение, $f\in ACP^{-1},$ обладающее $N$ и
$N^{\,-1}$--свой\-ст\-вами Лузина. Предположим, что существует
некоторое число $\delta>0,$ такое, что при всех $x\in B(b, \delta)$
имеет место неравенство
\begin{equation}\label{eq2A}
|f(x)|\le C \left(\log\frac{1}{|x-b|}\right)^{p}\,,
\end{equation}
где $p>0$ и $C>0$ -- некоторые постоянные. Пусть, кроме того,
существует измеримая по Лебегу функция $Q:D\rightarrow[1, \infty],$
такая, что $D_f(x, b)\le Q(x)$ при почти всех $x\in D$ и $Q(x)\in
FMO(b).$ Тогда точка $b$ является для отображения $f$ либо полюсом,
либо устранимой особой точкой. Кроме того, найдётся постоянная
$\beta_n,$ зависящая только от размерности пространства $n$ и
функции $Q$ такая, что, как только вместо условия (\ref{eq2A}) имеет
место более сильное предположение:
\begin{equation}\label{eq16}
\lim\limits_{x\rightarrow
b}\frac{|f(x)|}{\left(\log\frac{1}{|x-b|}\right)^{\beta_n}}=0\,,
\end{equation}
то точка $x=b$ является устранимой для отображения $f.$ }
\end{theorem}

\begin{theorem}\label{th2} {\sl Пусть $b\in D$ и $f:D\setminus\{b\}\rightarrow {\Bbb
R}^n$ -- открытое дискретное дифференцируемое почти всюду
отображение, $f\in ACP^{\,-1},$ обладающее $N$ и
$N^{\,-1}$-свой\-ст\-вами Лузина, а $Q:D\rightarrow[0, \infty]$ --
некоторая локально интегрируемая по Лебегу функция такая, что
$D_f(x, b)\le Q(x)$ при почти всех $x\in D\setminus\{b\}.$
Предположим, что существуют некоторые числа $\delta,$ $C,$ $p>0$ и
$\varepsilon_0>0,$ $\varepsilon_0<{\rm dist}\,(b, \partial D),$
такие, что при всех $x\in B(b, \delta)\setminus \{b\}$ имеет место
неравенство
\begin{equation}\label{eq2}
|f(x)|\le C \cdot
\exp\left\{p\int\limits_{|x-b|}^{\varepsilon_0}\frac{dt}{tq_{b}^{1/(n-1)}(t)}\right\}\,.
\end{equation}
Пусть, кроме того,
$\int\limits_{0}^{\varepsilon_0}\frac{dt}{tq_{b}^{1/(n-1)}(t)}=\infty$
и
$\int\limits_{\varepsilon}^{\varepsilon_0}\frac{dt}{tq_{b}^{1/(n-1)}(t)}<\infty$
при достаточно малых $\varepsilon.$ Тогда точка $b$ является для
отображения $f$ либо полюсом, либо устранимой особой точкой. Если
вместо условия (\ref{eq2}) имеет место более сильное предположение
$$\lim\limits_{x\rightarrow b} |f(x)|\cdot
\exp\left\{-\int\limits_{|x-b|}^{\varepsilon_0}\frac{dt}{tq_{b}^{1/(n-1)}(t)}\right\}
=0\,,$$ то точка $x=b$ является для отображения $f$ устранимой
особой точкой.}
\end{theorem}

\begin{corollary}\label{cor2}
{\sl Заключения теорем \ref{th1} и \ref{th2} остаются выполненными,
если $f$ -- отображение с конечным искажением длины.}
\end{corollary}

\section{До\-ка\-за\-тель\-с\-т\-во неравенства типа Вяйсяля}

Пусть $E$ -- множество в ${\Bbb R}^n$ и $\gamma :\Delta\rightarrow
{\Bbb R}^n$ -- некоторая кривая. Обозначим
$\gamma\cap E=\gamma\left(\Delta\right)\cap E.$
Пусть кривая $\gamma$ локально спрямляема и функция длины
$l_{\gamma}(t)$ такова, как было определено в предыдущем разделе.
Полагаем
$ l\left(\gamma\cap E\right):= m_1\,(E_ {\gamma}), \quad E_ {\gamma}
= l_{\gamma}\left(\gamma ^{-1}\left(E\right)\right).$
Здесь, как обычно, $m_1\,(A)$ обозначает длину (линейную меру
Лебега) множества $A\subset {\Bbb R}.$ Заметим, что
$E_{\gamma}=\gamma_0^{\,-1}(E),$
где $\gamma _0 :\Delta _{\gamma}\rightarrow {\Bbb R}^n$ --
натуральная параметризация кривой $\gamma,$  и что
$l\left(\gamma\cap E\right) = \int\limits_{\gamma} \chi_E(x)\,|dx| =
\int\limits_{\Delta _{\gamma}} \chi _{E_\gamma }(s)\,dm_1(s),$
см. \cite[разд.~4, с.~8]{Va}. Следующее утверждение связывает
свойства функции длины локально спрямляемой кривой со свойствами
произвольного измеримого множества в ${\Bbb R}^n$ (см.
\cite[теорема~9.1]{MRSY}).

\begin{proposition}\label{pr8}{\sl\,
Пусть $E$ -- множество в области $D\subset{\Bbb R}^n,$ $n\ge 2.$
Тогда множество $E$ измеримо тогда и только тогда, когда множество
$\gamma\cap E$ измеримо для почти всех кривых $\gamma$ в $D.$ Более
того, $m(E)=0$ тогда и только тогда, когда $l(\gamma\cap E)=0$ для
почти всех кривых $\gamma$ в $D.$ }
\end{proposition}

Весьма полезным для дальнейшего исследования является следующее
замечание.

\begin{remark}\label{rem15}
Пусть $\gamma:[a, b]\rightarrow {\Bbb R}^n$ спрямляемая кривая и
велчина $S(\gamma, [a,t])$ обозначает длину кривой $\gamma|_{[a,
t]}.$ Заметим, что свойства функции $L_{\gamma, f}$ между
натуральными параметрами $l_{\gamma}(t)$ и
$l_{\widetilde{\gamma}}(t)$ (локально спрямляемых) кривых $\gamma$ и
$\widetilde{\gamma}$ таких, что $\widetilde{\gamma}=f\circ\gamma,$
не зависят от выбора $t_0\in (a, b).$ В случае замкнутой кривой
$\gamma$ мы будем считать, что $t_0=a,$ поскольку при заданном
$t_0\in (a, b)$ выполнено равенство $S(\gamma, [a, t])=S(\gamma, [a,
t_0])+l_{\gamma}(t).$ Пусть $I=[a,b].$ Для спрямляемой кривой
$\gamma:I\rightarrow {\Bbb R}^n$ определим функцию длины
$l_{\gamma}(t)$ по следующему правилу: $l_{\gamma}(t)=S\left(\gamma,
[a,t]\right).$ В дальнейшем для замкнутых кривых мы отождествляем
функции $l_{\gamma}(t)$ и $S\left(\gamma, [a,t]\right),$ если не
оговорено противное.
\end{remark}

Пусть $\alpha:[a,b]\rightarrow {\Bbb R}^n$ -- спрямляемая замкнутая
кривая в ${\Bbb R}^n,$ $n\ge 2,$ $l(\alpha)$ -- её длина. {\it
Нормальным представлением кривой $\alpha$ } называется кривая
$\alpha^0:[0, l(\alpha)]\rightarrow {\Bbb R}^n,$ такая что
$\alpha(t)=\alpha^0\left(S\left(\alpha, [a, t]\right)\right)=
\alpha^0\circ l_{\alpha}(t).$ Отметим, что такая кривая
$\alpha^{\,0}$ существует и единственна, при этом, $S\left(\alpha^0,
[0, t]\right)=t$ при $t\in [0, l(\alpha)],$ см.
\cite[теорема~2.4]{Va}.

\medskip
Далее $I$ означает открытый, замкнутый или полуоткрытый конечный
интервал числовой оси. Следующее определение может быть найдено в
\cite[п.~5, гл.~II]{Ri}.

Пусть $f:D\rightarrow {\Bbb R}^n$ -- дискретное отображение,
$\beta:I_0\rightarrow {\Bbb R}^n$ замкнутая спрямляемая кривая и
$\alpha:I\rightarrow D$ кривая такая, что $f\circ \alpha\subset
\beta.$ Если функция длины $l_{\beta}:I_0\rightarrow [0, l(\beta)]$
постоянна на некотором интервале $J\subset I,$ то $\beta$ постоянна
на $J$ и, в силу дискретности $f,$ кривая $\alpha$ также постоянна
на $J.$ Следовательно, существует единственная кривая
$\alpha^{\,*}:l_\beta(I)\rightarrow D$ такая, что
$\alpha=\alpha^{\,*}\circ (l_\beta|_I).$ Будем говорить, что
$\alpha^{\,*}$ является \index{$f$-представление кривой}{\it
$f$-пред\-став\-лением кривой $\alpha$ относительно $\beta.$ }

\medskip
Следующее утверждение содержит в себе критерий выполнения
свой\-с\-т\-ва $ACP^{\,-1}$ относительно произвольного отображения
$f$ в терминах абсолютной непрерывности соответствующих кривых.

\begin{lemma}\label{pr11}{\sl\,
Слабо нульмерное отображение $f:D\rightarrow {\Bbb R}^n$ обладает
$ACP^{\,-1}$-свой\-с\-т\-вом тогда и только тогда, когда кривая
$\gamma^{\,*}$ является спрямляемой и абсолютно непрерывной для
почти всех замкнутых кривых $\widetilde{\gamma}=f\circ\gamma.$

Тут и далее $\gamma^{\,*}$ означает $f$-пред\-став\-ление кривой
$\gamma$ по отношению к $\widetilde{\gamma}.$}
\end{lemma}

\begin{proof} {\it Необходимость.} Пусть $f$ обладает
$ACP_p^{\,-1}$-свой\-с\-т\-вом. Тогда, во-первых, $L_{\gamma,
f}^{\,-1}$ определена корректно для поч\-ти всех кривых
$\widetilde{\gamma}$ таких, что $\widetilde{\gamma}=f\circ\gamma.$
Во-вторых, кривая $\gamma^{\,*}$ является спрямляемой для $p$-почти
всех замкнутых кривых $\widetilde{\gamma}$ как только
$\widetilde{\gamma}=f\circ \gamma,$ поскольку
$(\gamma^{\,*})^{\,0}=\gamma^{\,0}$ (см. в \cite[теорема~2.6]{Va}).
Кроме того, для почти всех замкнутых кривых $\widetilde{\gamma}$ и
всех $\gamma,$ таких что $\widetilde{\gamma}=f\circ \gamma,$ мы
получаем равенство
$\gamma(t)=\gamma^{\,*}\circ
l_{\widetilde{\gamma}}(t)=\gamma^{\,0}\circ
l_{\gamma}(t)=\gamma^{\,0}\circ L_{\gamma,
f}^{\,-1}\left(l_{\widetilde{\gamma}}(t)\right).$
Полагая $l_{\widetilde{\gamma}}(t):=s,$ мы получаем
\begin{equation}\label{equa4}
\gamma^{\,*}(s)=\gamma^{\,0}\circ L_{\gamma, f}^{\,-1}(s)\,.
\end{equation}
Таким образом, кривая $\gamma^{\,*}$ абсолютно непрерывна, поскольку
$L_{\gamma, f}^{\,-1}(s)$ абсолютно непрерывна и
$|\gamma^{\,0}(t_1)-\gamma^{\,0}(t_2)|\le |t_1-t_2|$
для всех $t_1, t_2\in [0, l(\gamma)]$).

{\it Достаточность.} Согласно предположению, кривая $\gamma^{\,*}$
спрямляема для почти всех замкнутых кривых
$\widetilde{\gamma}=f\circ \gamma;$ в частности,
$\gamma^{\,*\,0}=\gamma^{\,0}.$ Более того, для таких кривых
$l_{\gamma^{\,*}}(s)=L_{\gamma, f}^{\,-1}(s)$ и функция
$L^{\,-1}_{\gamma, f}$ определена корректно. Следовательно, для
почти всех замкнутых кривых $\widetilde{\gamma}$ и всех $\gamma$
таких, что $\widetilde{\gamma}=f\circ \gamma,$ кривая $\gamma$
спрямляема и функция $L_{\gamma, f}^{\,-1}(s)$ абсолютно неперервна
(см. в \cite[теорема~1.3]{Va}). Пусть $\Gamma_1$ -- семейство всех
замкнутых кривых $\widetilde{\alpha}=f\circ\alpha$ в области $f(D)$
таких, что кривая $\alpha^{\,*}$ либо не спрямляема, либо функция
$L_{\alpha, f}^{\,-1}(s)$ не абсолютно непрерывна. Пусть $\Gamma$ --
семейство всех кривых $\widetilde{\gamma}=f\circ\gamma$ в области
$f(D),$ таких что $\gamma$ либо не локально спрямляема, либо функция
$L_{\gamma, f}^{\,-1}(s)$ не локально абсолютно непрерывна. Тогда
$\Gamma>\Gamma_1$ и, следовательно, ввиду свойства (\ref{eq32*A}),
$M(\Gamma)\le M(\Gamma_1)=0,$ что и требовалось доказать.
\end{proof}

Следующий результат доказан в монографии \cite[разд.~8, леммы~8.2 и
8.3]{MRSY} (см. также \cite[следствие~3.14 и лемма~3.20]{MRSY$_1$}).

\begin{proposition}\label{lem2.4}
{\sl\, Пусть отображение $f:D\rightarrow {\Bbb R}^n$ почти всюду
дифференцируемо и обладает $N$ и $N^{\,-1}$-свой\-ст\-вами Лузина.
Тогда найдётся не более чем счётная последовательность компактных
множеств $C_k^{\,*}\subset D,$ такая что $m(B)=0,$ где $B=D\setminus
\bigcup\limits_{k=1}^{\infty} C_k^{\,*}$ и $f|_{C_k^{\,*}}$ взаимно
однозначно и билипшицево для каждого $k=1,2,\ldots .$ Более того,
$f$ дифференцируемо при всех $x\in C_k^{\,*}$ и выполнено условие
$J(x,f)\ne 0.$}
\end{proposition}

\medskip {\it Доказательство теоремы \ref{th3.1}}. Заметим, прежде
всего, что мера Лебега множества точек ветвления отображения $f$
равна нулю, $m(B_f)=0$ (см., напр., \cite[предложение~8.4]{MRSY}).
Пусть множества $B$ и $C_{k}^{\,*}$ таковы, как указано в
предложении \ref{lem2.4}. Полагаем $B_0=B\cup B_f,$
$B_1=C_{1}^{\,*}\setminus B_f,$ $B_2=C_{2}^{\,*}\setminus
B_1\ldots\,,$
$B_k= C_{k}^{\,*}\setminus \left(\bigcup\limits_{l=1}^{k-1}B_l\cup
B_f\right).$
Таким образом, мы получим не более, чем счётное покрытие области $D$
борелевскими множествами $B_k,$ $k=0,1,\ldots,$ причём $B_l\cap
B_j=\varnothing$ при $l\ne j$ и $m(B_0)=0,$ где $B_0=D\setminus
\bigcup\limits_{k=1}^{\infty} B_k.$ Поскольку отображение $f$
обладает $N$-свойством, получаем $m(f(B_0))=0.$ По предложению
\ref{pr8} $l\left(\overline{\gamma}\cap f(B_0)\right)=0$ для п.в.
кривых $\overline{{\gamma}}$ в области $f(D).$ Следовательно,
$\widetilde{\gamma}^{\,0}(s)\not\in f(B_0)$ для почти всех замкнутых
кривых $\widetilde{\gamma}$ в области $f(D)$ и почти всех $s\in [0,
l(\widetilde{\gamma})];$ здесь $\widetilde{\gamma}^{\,0}(s)$
обозначает нормальное представление кривой $\widetilde{\gamma}(s).$
Кроме того, по лемме \ref{pr11} кривая $\gamma^{\,*},$ являющаяся
$f$-пред\-став\-лением кривой $\gamma,$ абсолютно непрерывна для
почти всех замкнутых кривых $\widetilde{\gamma}=f\circ\gamma.$ Здесь
$f$-пред\-став\-ление $\gamma^{\,*}$ кривой $\gamma$ корректно
определено для почти всех кривых $\widetilde{\gamma}=f\circ\gamma,$
поскольку по предположению $f$ -- дискретное отображение.

\medskip
Учитывая, что модуль семейства неспрямляемых кривых равен нулю (см.
\cite[следствие~6.11]{Va}), а также то, что каждая спрямляемая
кривая $\gamma:(a, b)\rightarrow {\Bbb R}^n$ ($\gamma:[a,
b)\rightarrow {\Bbb R}^n$ или $\gamma:(a, b]\rightarrow {\Bbb R}^n$)
может быть продолжена непрерывным образом до соответствующей
замкнутой кривой $\gamma:[a, b]\rightarrow {\Bbb R}^n$ (см.
\cite[теорема~3.2]{Va}), мы можем считать, что $\Gamma^{\,\prime}$
состоит только из замкнутых спрямляемых кривых.

\medskip
Пусть $\rho:[r_1, r_2]\rightarrow {\Bbb R}$ -- неотрицательная
измеримая по Лебегу функция такая, что
$\int\limits_{r_1}^{r_2}\rho(r)dr\ge 1.$ Полагаем
$$\rho^*(x)\,=\,\left\{\begin{array}{rr}
\rho(|x-x_0|)\left(\frac{D_f(x, x_0)}{J(x, f)}\right)^{1/n}, &   x\in A(r_1, r_2, x_0)\setminus B_0,\\
0,  &  x\in B_0\,.
\end{array}
\right.$$
Рассмотрим следующую функцию:
\begin{equation}\label{equa9}
\widetilde{\rho}(y)\quad=\quad\frac{1}{m}\cdot
\chi_{f\left(D\setminus B_0
\right)}(y)\sup\limits_{C}\,\sum\limits_{x\,\in\,C}\rho^{\,*}(x)\,,
\end{equation}
где $C$ пробегает все подмножества $f^{-1}(y)$ в $D\setminus B_0,$
количество элементов которых не больше $m.$ Заметим, что
\begin{equation}\label{equ5}
\widetilde{\rho}(y)\quad=\quad\frac{1}{m}\cdot
\sup\sum\limits_{i=1}^s \rho_{k_i}(y)\,,
\end{equation}
где $\sup$ в (\ref{equ5}) берётся по всем возможным наборам
$\left\{k_{i_1},\ldots,k_{i_s}\right\}$ таким, что $i\in {\Bbb N},$
$k_i\in\,{\Bbb N},$ $k_i\ne k_j$ при $i\ne j,$ всех $s\le m$ и
$$
\rho_k(y)\,=\left\{\begin{array}{rr}
\,\rho^{*}\left(f_k^{-1}(y)\right), &   y\in f(B_k),\\
0,  &  y\notin f(B_k)\,,
\end{array}
\right.$$
а каждое из отображений $f_k=f|_{B_k},$ $k=1,2,\ldots,$ инъективно.
Из (\ref{equ5}) следует, что функция $\widetilde{\rho}(y)$ является
борелевской, поскольку множества $f(B_k)$ борелевские, см.
\cite[разд.~2.3.2]{Fe}. Пусть $\beta$ -- произвольная кривая
семейства $\Gamma^{\,\prime}.$

По условию найдутся кривые $\alpha_1,\ldots,\alpha_m$ семейства
$\Gamma,$ такие, что $f\circ \alpha_j\subset \beta$ для всех
$j=1,2,\ldots, m,$ и при каждом фиксированном $x\in D$ и $t\in I$
равенство $\alpha_j(t)=x$ справедливо не более, чем при $i(x,f)$
индексах $j.$

Покажем, что  функция $\widetilde{\rho}\,\in\,{\rm }\,\,{\rm
adm}\,\Gamma^{\,\prime}\setminus \Gamma_0,$ где $M(\Gamma_0)=0.$
Пусть $\beta$ -- кривая семейства $\Gamma^{\,\prime}$ и $\beta^0:[0,
l(\beta)]\rightarrow {\Bbb R}^n$ -- нормальное представление кривой
$\beta,$ $\beta(t)=\beta^0\circ l_{\beta}(t).$ Обозначим символами
$\alpha_j^{\,*}(s):I_j\rightarrow D$ соответствующие
$f$-пред\-став\-ле\-ния кривых $\alpha_j$ относительно кривой
$\beta,$ т.е. $\alpha_j(t)=\alpha^*_j\circ l_{\beta}(t),$ $t\in
I_j,$ $f\circ \alpha^*_j\subset \beta^0,$ $j=1,2,\ldots, m.$
Обозначим
$$h_j(s)=\rho^{\,*}\left(\alpha^*_j(s)\right)\chi_{I_j}(s)\,,\quad s\in [0, l(\beta)]\,,\quad
J_s:=\{j:s\in I_j\}\,.$$
Поскольку по предположению $\beta^0(s)\not\in f(B_0)$ при почти всех
$s\in [0, l(\beta)],$ при этих же $s$ точки $\alpha^{\,*}_j(s)\in
f^{\,-1}(\beta^0(s)),$ $j\in J_s,$ являются различными ввиду условия
теоремы, что равенство $\alpha_j(t)=x$ возможно не более, чем при
$i(x, f)$ индексах $j,$ а $i(x, f)=1$ на каждом $B_k$ по построению.
Тогда, по определению функции $\widetilde{\rho}$ в (\ref{equa9}),
при почти всех $s\in [0, l(\beta)]$
\begin{equation}\label{eq6.1}
\widetilde{\rho}(\beta^0(s))\ge
\frac{1}{m}\cdot\sum\limits_{j=1}^{m} h_j(s)\,.
\end{equation}
Из соотношения (\ref{eq6.1}) получаем
$$\int\limits_{\beta}\widetilde{\rho}(y)\,|dy|=\int\limits_{0}^{l(\beta)}
\widetilde{\rho}(\beta^0(s))\,ds\ge
$$
\begin{equation}\label{eq6.2}
\ge \frac{1}{m}\cdot\sum\limits_{j=1}^{m}
\int\limits_{0}^{l(\beta)}h_j(s) ds=
\frac{1}{m}\cdot\sum\limits_{j=1}^{m}
\int\limits_{I_j}\rho^{\,*}\left(\alpha^*_j(s)\right)dm_1(s)\,.
\end{equation}
Осталось показать, что
\begin{equation}\label{eq6.4}
\int\limits_{I_j}\rho^{\,*}\left(\alpha^*_j(s)\right)dm_1(s)\ge 1
\end{equation}
для почти всех кривых $\beta\in \Gamma^{\,\prime}.$ Если
$\int\limits_{I_j}\rho^{\,*}\left(\alpha^*_j(s)\right)dm_1(s)=\infty,$
доказывать нечего. Пусть
$\int\limits_{I_j}\rho^{\,*}\left(\alpha^*_j(s)\right)dm_1(s)<\infty.$
Заметим, что
$$\int\limits_{I_j}\rho^{\,*}\left(\alpha^*_j(s)\right)dm_1(s)=$$
\begin{equation}\label{equa3}
=\int\limits_{I_j}\frac{\rho(|\alpha^*_j(s)-x_0|)}{\left|\frac{dr_j}{ds}(s_*)\right|}\left(\frac{D_f(\alpha^*_j(s),
x_0)}{J(\alpha^*_j(s), f)}\right)^{1/n}\cdot
\left|\frac{dr_j}{ds}(s_*)\right|dm_1(s)\,,
\end{equation}
где $r_j(s_*):=|\alpha^0_j(s_*)-x_0|.$ Используя равенство вида
(\ref{equa4}), мы получим, что $\alpha^*_j(s)=\alpha^0_j\circ
L_{\alpha_j, f}^{\,-1}(s)=\alpha^0_j(s_*),$ где $s_*=L_{\alpha_j,
f}^{\,-1}(s),$ откуда также $s=L_{\alpha_j, f}(s_*).$ Заметим, что
для почти всех кривых $\beta\in \Gamma^{\,\prime}$ функция
$s=L_{\alpha_j, f}(s_*)$ обладает $N^{\,-1}$-свойством ввиду того,
что $f\in ACP^{\,-1}$ (см., напр., \cite[теорема~2.10.13]{Fe});
значит, ввиду известной теоремы Пономарёва $\frac{ds}{ds_*}(s_*)\ne
0$ для почти всех $s_*\in [0, l(\alpha_j)]$ (см.
\cite[теорема~1]{Pon}). Тогда по теореме о дифференцируемости
сложной функции при почти всех $s_*\in [0, l(\alpha_j)]$
\begin{equation}\label{equa5}
\left|\frac{dr_j}{ds}(s_*)\right|=\frac{\left|\frac{dr_j}{ds_*}\right|}{\left|\frac{ds}{ds_*}\right|}\,.
\end{equation}
Путём прямых вычислений, нетрудно показать, что
%
$$\left|\frac{dr_j}{ds_*}\right|=\left|\left(\alpha^{0\,\prime}_j(s_*),
\frac{\alpha^0_j(s_*)-x_0}{|\alpha^0_j(s_*)-x_0|}\right)\right|$$
%
(отметим, что кривая $\alpha^0_j(s_*)$ дифференцируема при почти
всех $s_*\in (0, l(\alpha_j))$). С другой стороны, имеем для почти
всех $s_*,$ что
$$\beta^{\,\prime}(s_*)=f^{\,\prime}(\alpha_j^0(s_*))\alpha_j^{0\,\prime}(s_*)=
\partial_{\alpha_j^{0\,\prime}(s_*)}f(\alpha_j^0(s_*))$$ и, в то же время, $$\frac{ds}{ds_*}=L^{\,\prime}_{\alpha_j,
f}(s_*)=|\beta^{\,\prime}(s_*)|$$ (по поводу последнего равенства
см., напр., \cite[пункт~(5), теорема~1.3]{Va}). Таким образом, из
(\ref{equa5}) вытекает, что
\begin{equation}\label{equa6}
\left|\frac{dr_j}{ds}(s_*)\right|=\frac{\left|\left(\alpha^{0\,\prime}_j(s_*),
\frac{\alpha^0_j(s_*)-x_0}{|\alpha^0_j(s_*)-x_0|}\right)\right|}
{|\partial_{\alpha_j^{0\,\prime}(s_*)}f(\alpha_j^0(s_*))|}\le
\frac{1}{l_f(\alpha_j^{0}(s_*), x_0)}\,.
\end{equation}
Таким образом, из (\ref{equa3}) и (\ref{equa6}) вытекает, что
$$\int\limits_{I_j}\rho^{\,*}\left(\alpha^*_j(s)\right)dm_1(s)\ge$$
\begin{equation}\label{equa10}
\ge \int\limits_{I_j}
\frac{\rho(|\alpha^*_j(s)-x_0|)}{l_f(\alpha^*_j(s), x_0)}\cdot
l_f(\alpha^*_j(s), x_0)\cdot
\left|\frac{dr_j}{ds}(s_*)\right|dm_1(s)=
\end{equation}
$$=\int\limits_{I_j}
\rho(r_j(s_*(s)))\cdot
\left|\frac{dr_j}{ds}(s_*(s))\right|dm_1(s)\,.$$
Заметим, что функция $r_j$ абсолютно непрерывна относительно
параметра $s_*$ для почти всех кривых $\beta\in \Gamma^{\,\prime}.$
Поскольку по предположению
$\int\limits_{I_j}\rho^{\,*}\left(\alpha^*_j(s)\right)dm_1(s)<\infty$
ввиду замены переменных относительно линейной меры в интеграле
Лебега (см. \cite[теорема~3.2.6]{Fe}), из (\ref{equa10}) вытекает,
что
$$\int\limits_{I_j}\rho^{\,*}\left(\alpha^*_j(s)\right)dm_1(s)\ge$$
$$
\ge \int\limits_{I_j} \rho(r_j(s_*(s)))\cdot
\frac{dr_j}{ds}(s_*(s))dm_1(s)=\int\limits_{r_1}^{r_2}\rho(r)dr\ge
1\,,
$$
что и требовалось установить. Следовательно, из (\ref{eq6.2}) и
(\ref{eq6.4}) вытекает, что функция $\widetilde{\rho}\,\in\,{\rm
}\,\,{\rm adm}\,\Gamma^{\,\prime}\setminus \Gamma_0,$ где
$M(\Gamma_0)=0$ и, значит,
\begin{equation}\label{equ6*}
M\left(\Gamma^{\,\prime}\right)\quad\le\quad\int\limits_{f(A)}\widetilde{\rho}\,^n(y)\,\,dm(y)\,.
\end{equation}
Согласно \cite[теорема~3.2.5]{Fe} для $m=n,$ получаем, что
\begin{equation}\label{equ7*}
\int\limits_{B_k}D_f(x,
x_0)\cdot\rho^n(|x-x_0|)\,\,dm(x)\quad=\quad\int\limits_{f(A)}
\rho^n_k(y)\,dm(y)\,.
\end{equation}
Заметим также, что по неравенству Гёльдера для сумм
\begin{equation}\label{equ8*}
\left(\frac{1}{m}\cdot\sum\limits_{i=1}^{s}\rho_{k_i}(y)\right)^n\quad\le\quad
\frac{1}{m}\cdot \sum\limits_{i=1}^{s}\,\rho^n_{k_i}(y)
\end{equation}
для произвольного $1 \le s \le m$ и любого набора
$\left\{k_1,\ldots,k_s\right\}$ длины $s,$ $1\le i\le s,$ $k_i\in
{\Bbb N},$ $k_i\ne k_j,$ если $i\ne j.$ Тогда по теореме об
аддитивности интеграла Лебега, см., напр., \cite[теорема~12.3,
$\S\,12,$ разд.~I]{Sa}, из (\ref{equ6*}), (\ref{equ7*}) и
(\ref{equ8*}) получаем, что
%
$$\frac{1}{m}\cdot\int\limits_{D}D_f(x,\,x_0)\cdot\rho^n(|x-x_0|)\,\,dm(x)\quad
=\quad\frac{1}{m}\cdot\int\limits_{f\left(A\right)}\,\sum\limits_{k=1}^{\infty}
\rho_k^n(y)\,dm(y)\quad\ge$$
%
%
$$\ge\quad\frac{1}{m}\cdot\int\limits_{f\left(A\right)}
\sup\limits_{\left\{k_1,\ldots,k_s\right\},\, k_i\in {\Bbb N},\atop
k_i\ne k_j, \,\, i\ne j}\sum\limits_{i=1}^s
\rho^n_{k_i}(y)\,dm(y)\quad\ge\quad
\int\limits_{f\left(A\right)}\,\widetilde{\rho}^{\,n}(y)\,dm(y)\quad\ge$$
$$\ge M(\Gamma^{\,\prime})\,.$$
Теорема доказана. $\Box$

\medskip
\begin{remark}\label{rem1}
Утверждение теоремы \ref{th3.1}, очевидно, остаётся справедливым,
если $x_0$ является изолированной точкой границы области $D.$
\end{remark}

\section{Приложения к проблеме устранения особенностей отображений}

Аналоги утверждений, приводимых в настоящем разделе, доказаны в
работе \cite{Sev$_2$} для внутренних дилатаций $K_I(x, f).$ Здесь
рассматривались дифференцируемые почти всюду отображения, обладающие
$N,$ $N^{\,-1}$ и $ACP^{\,-1}$-свойствами, для которых их внутренняя
дилатация почти всюду удовлетворяет условию $K_I(x, f)\le Q(x).$ При
определённых условиях на функцию $Q,$ а также условиях на рост в
окрестности изолированной особой точки $b\in {\Bbb R}^n,$ было
показано, что такие отображения продолжаются в точку $b$ по
непрерывности. Как оказалось, условие $K_I(x, f)\le Q(x)$ вполне
можно ослабить до условия $D_f(x, b)\le Q(x)$ ввиду теоремы
\ref{th3.1}. С другой стороны, отметим, что (независимо от условий
на дилатации, вплоть даже до их ограниченности либо равенства
единице) даже аналитические функции на плоскости не продолжаются в
изолированную точку границы области по непрерывности без некоторого
дополнительного условия выпускания этими отображениями некоторого
множества положительной ёмкости ($\varphi(z)=\exp\{1/z\},\quad
b=0$). Однако, устранение изолированной особенности имеет место,
если вместо упомянутого ёмкостного условия потребовать, чтобы в
окрестности точки $b$ соответствующее отображение имело "достаточно
слабый"\,\, порядок роста. Рассмотрим следующие определения.

Для отображения $f:D\rightarrow{\Bbb R}^n,$ множества $E\subset D$ и
$y\in{\Bbb R}^n,$  определим {\it функцию кратности $N(y,f,E)$} как
число прообразов точки $y$ во множестве $E,$ т.е.
$N(y,f,E)={\rm card}\,\left\{x\in E: f(x)=y\right\}.$ Множество
$G\subset {\Bbb R}^n$ условимся называть {\it множеством нулевой
ёмкости,} пишем ${\rm cap\,}G =0,$ если существует континуум
$T\subset {\Bbb R}^n,$ такой что $M(\Gamma(T, G, {\Bbb R}^n))=0,$
см., напр., \cite[разд.~2 гл.~III и предложение 10.2 гл.~II]{Ri}. В
противном случае мы говорим, что $G$ имеет положительную ёмкость,
что записываем как ${\rm cap\,}G >0.$ Множества ёмкости нуль, как
известно, всюду разрывны (любая компонента их связности вырождается
в точку), т.е., условие ${\rm cap\,}G =0$ влечёт, что ${\rm
Int\,}G=\varnothing,$ см., напр., \cite[следствие~2.5, гл.~III]{Ri}.
Открытое множество $U\subset D,$ $\overline{U}\subset D,$ называется
{\it нормальной окрестностью} точки $x\in D$ при отображении
$f:D\rightarrow {\Bbb R}^n,$ если $U\cap
f^{\,-1}\left(f(x)\right)=\left\{x\right\}$ и $\partial
f(U)=f(\partial U),$ см., напр., \cite[разд.~4, гл.~I]{Ri}.

\begin{proposition}\label{pr2}{\sl\, Пусть $f:D\rightarrow {\Bbb R}^n$ открытое дискретное отображение.
Тогда для каждого $x\in D$ существует $s_x,$ такое, что при всех
$s\in (0, s_x)$ компонента связности множества $f^{-1}\left(B(f(x),
s)\right),$ содержащая точку $x,$ и обозначаемая символом
$U(x,f,s),$ является нормальной окрестностью точки $x$ при
отображении $f,$ при этом $f\left(U(x,f,s)\right)=B(f(x), s)$ и
$d(U(x,f,s))\rightarrow 0$ при $s\rightarrow 0.$ (Здесь, как и
прежде, $d(A)$ обозначает евклидов диаметр множества $A\subset {\Bbb
R}^n$).}
\end{proposition}

Важную роль при доказательстве основных результатов работы играют
следующее утверждение, см. \cite[лемма~5.1]{SS}.

\medskip
\begin{proposition}\label{pr4}{\sl\,Пусть $Q:{\Bbb B}^n\setminus
\left\{0\right\}\rightarrow [0, \infty]$ -- измеримая по Лебегу
функция, $f:{\Bbb B}^n\setminus \left\{0\right\} \rightarrow
\overline{{\Bbb R}^n},$ $n \ge 2\,,$ -- открытое дискретное
отоб\-ра\-же\-ние, удовлетворяющее неравенству
\begin{equation}\label{eq2AA}
M(f(\Gamma))\le \int\limits_{D} Q(x)\cdot \eta^n (|x-x_0|)\,dm(x)\,,
\end{equation}
где $\Gamma:=\Gamma(S(0, r_1), S(0, r_2), A(r_1, r_2, 0)),$
$0<r_1<r_2<1$ и $\eta:[r_1, r_2]\rightarrow {\Bbb R}$ --
произвольная неотрицательная измеримая по Лебегу функция,
удовлетворяющая условию $\int\limits_{r_1}^{r_2}\eta(r)dr\ge 1.$
Пусть, кроме того, ${\rm cap}\,\left(\overline{{\Bbb
R}^n}\setminus\,f\left({\Bbb
B}^n\setminus\left\{0\right\}\right)\right)>0.$ Предположим, что
существует $\varepsilon_0\in(0,1)$ такое, что при
$\varepsilon\rightarrow 0$
$\int\limits_{\varepsilon<|x|<\varepsilon_0}Q(x)\cdot\psi^n(|x|) \
dm(x)\,=\,o\left(I^n(\varepsilon, \varepsilon_0)\right),$
где $\psi(t)$ -- неотрицательная на $(0,\infty)$ функция, такая что
$\psi(t)>0$ п.в. и $
0<I(\varepsilon,
\varepsilon_0)=\int\limits_{\varepsilon}^{\varepsilon_0}\psi(t)dt <
\infty
$
для всех $\varepsilon\in(0, \varepsilon_0).$ Тогда $f$ имеет
непрерывное продолжение $f:{\Bbb B}^n\rightarrow\overline{{\Bbb
R}^n}$ в ${\Bbb B}^n.$ }
\end{proposition}

Следующее определение может быть найдено в \cite[гл.~II, п.~3]{Ri}.
Пусть $f:D \rightarrow {\Bbb R}^n$, $n\ge 2,$ -- отображение,
$\beta: [a,\,b)\rightarrow {\Bbb R}^n$ -- некоторая кривая и
$x\in\,f^{\,-1}\left(\beta(a)\right).$ Кривая $\alpha:
[a,\,c)\rightarrow D$ называется {\it максимальным поднятием} кривой
$\beta$ при отображении $f$ с началом в точке $x,$ если $(1)\quad
\alpha(a)=x;$ $(2)\quad f\circ\alpha=\beta|_{[a,\,c)};$ $(3)$\quad
если $c<c^{\prime}\le b,$ то не существует кривой $\alpha^{\prime}:
[a,\,c^{\prime})\rightarrow D,$ такой что
$\alpha=\alpha^{\prime}|_{[a,\,c)}$ и $f\circ
\alpha=\beta|_{[a,\,c^{\prime})}.$ Следующая конструкция является
обобщением приведённого выше определения. Пусть $x_1,\ldots,x_k$ --
$k$ различных точек множества $f^{-1}\left(\beta(a)\right)$ и
\begin{equation}\label{eq5.8A}
\widetilde{m} = \sum\limits_{i=1}^k i(x_i,\,f)\,.
\end{equation}
Последовательность кривых $\alpha_1,\dots,\alpha_{\widetilde{m}}$
является {\it максимальной последовательностью поднятий кривой
$\beta$ при отображении $f$ с началом в точках $x_1,\ldots,x_k,$}
если

$(a)$\quad каждая кривая $\alpha_j$ является максимальным поднятием
кривой $\beta$ при отображении $f,$

$(b)\quad {\rm card}\,\left\{j:a_j(a)=x_i\right\}= i(x_i,\,f),\quad
1\le i\le k\,,$

$(c)\quad {\rm card}\,\left\{j:a_j(t)=x\right\}\le i(x,\,f)$ при
всех $x\in D$ и всех $t\in I_j,$ где $I_j$ -- область определения
кривой $\alpha_j.$ Отметим, что количество кривых $\widetilde{m}$
может быть больше, чем количество соответствующих точек $k,$ см.
соотношение (\ref{eq5.8A}). Следующее утверждение см., напр., в
\cite[теорема~3.2 гл.~II]{Ri}.

\medskip
\begin{proposition}\label{pr4.2.1}
{\sl\, Пусть $f$ -- открытое дискретное отображение и точки
$x_1,\ldots,x_k\in\,f^{-1}\left(\beta(a)\right).$ Тогда кривая
$\beta$ имеет максимальную последовательность поднятий при
отображении $f$ с началом в точках $x_1,\ldots,x_k.$}
\end{proposition}

\noindent Следующая лемма является фундаментальным утверждением
на\-с\-то\-я\-ще\-го раздела.

\begin{lemma}\label{lem4.4.1}{\sl\, Пусть $b\in D$ и
$f:D\setminus\{b\}\rightarrow {\Bbb R}^n$ -- открытое дискретное
дифференцируемое почти всюду отображение, $f\in ACP^{-1},$
обладающее $N$ и $N^{\,-1}$-свой\-ст\-вами Лузина. Предположим, что
существует некоторое число $\delta>0,$ такое, что при всех $x\in
B(b, \delta)\setminus \{b\}$ и некоторой строго убывающей функции
$\varphi:(0, \infty)\rightarrow (0, \infty),$ для которой
$\varphi(t)\rightarrow \infty$ при $t\rightarrow 0,$ имеет место
неравенство
\begin{equation}\label{eq4.4.1}
|f(x)|\le C \cdot \varphi^p (|x-b|)\,,
\end{equation}
где $p>0$ и $C>0$ -- некоторые постоянные. Пусть, кроме того,
существуют измеримая по Лебегу функция $Q:D\rightarrow [0, \infty],$
числа $\varepsilon_0>0,$ $\varepsilon_0<{\rm dist\,}\left(b,
\partial D\right),$ $\varepsilon_0^{\,\prime}\in (0, \varepsilon_0),$ $A>0$ и
измеримая по Лебегу функция $\psi(t):(0, \varepsilon_0)\rightarrow
[0, \infty],$ $\psi(t)>0$ п.в., такие, что $D_f(x, x_0)\le Q(x)$ при
почти всех $x\in B(0, \delta)\setminus\{b\}$ и
$\varepsilon\rightarrow 0$
\begin{equation} \label{eq4.4.2}
\int\limits_{\varepsilon<|x-b|<\varepsilon_0}Q(x)\cdot\psi^n(|x-b|)
\ dm(x)\le \frac{A\cdot I^n(\varepsilon,
\varepsilon_0)}{\left(\log\varphi(\varepsilon)\right)^{n-1}}\,,
\end{equation} где
\begin{equation} \label{eq11}
I(\varepsilon, \varepsilon_0):
=\int\limits_{\varepsilon}^{\varepsilon_0}\psi(t)dt < \infty \qquad
\forall\quad\varepsilon \in(0,
\varepsilon_0^{\,\prime})\end{equation}
и, кроме того, $I(\varepsilon, \varepsilon_0)\rightarrow\infty$ при
$\varepsilon\rightarrow 0.$ Тогда точка $b$ является для отображения
$f$ либо полюсом, либо ус\-т\-ра\-ни\-мой особой точкой.}
\end{lemma}

\begin{proof} Предположим противное, а именно, что точка $b$
является существенно особой точкой отображения $f.$ Не ограничивая
общности рассуждений, можно считать, что $b=0,$ $\delta<{\rm
dist\,}(0, \partial D)$ и $C=1.$ В таком случае, сфера $S(0,
\delta)$ является компактным множеством в $D\setminus\{0\},$ поэтому
найдётся $R>0,$ такое что
\begin{equation}\label{eq4.4.3}
f\left(S(0, \delta)\right)\subset B(0, R)\,.
\end{equation}
В силу теоремы \ref{th3.1} отображение $f$ удовлетворяет оценке
(\ref{eq2AA}) в точке $b=0.$ Поскольку $b=0$ является существенно
особой точкой отображения $f,$ в виду условий (\ref{eq4.4.2}) и
(\ref{eq11}), по предложению \ref{pr4} отображение $f$ в $B(0,
\delta)\setminus \{0\}$ принимает все значения в ${\Bbb R}^n,$ за
исключением, может быть, некоторого множества ёмкости нуль, т.е.,
$N\left(y, f, {\Bbb B}^n\setminus \{0\}\right)=\infty$ при всех
$y\in {\Bbb R}^n\setminus E,$ где ${\rm cap\,}E=0.$ Так как $E$
имеет ёмкость нуль, множество ${\Bbb R}^n\setminus E$ не может быть
ограниченным. В таком случае, найдётся $y_0\in {\Bbb R}^n\setminus
\left(E\cup B(0, R)\right).$

Пусть $k_0>\frac{4Ap^{n-1}}{\omega_{n-1}},$ $k_0\in {\Bbb N}.$
Поскольку $N\left(y_0, f, {\Bbb B}^n\setminus \{0\}\right)=\infty,$
найдутся точки $x_1,\ldots,x_{k_0}\in f^{-1}(y_0)\cap \left(B(0,
\delta)\setminus\{0\}\right).$ По предложению \ref{pr2} при
некотором фиксированном $r>0$ каждая точка $x_j,$ $j=1,\ldots,k_0,$
имеет нормальную окрестность $U_j:=U(x_j, f , r),$ такую, что
$\overline{U_l}\cap\overline{U_{\widetilde{m}}}=\varnothing$ при
всех $l\ne \widetilde{m}, $ $l, \widetilde{m}\in {\Bbb N},$ $1\le
l\le k_0$ и $1\le \widetilde{m}\le k_0.$

Полагаем $d:=\min\left\{\varepsilon_0, {\rm dist\,}\left(0,
\overline{U_1}\cup\ldots\cup \overline{U_{k_0}}\right)\right\}.$
Пусть $a\in (0, d)$ и $V:=B(0, \delta)\setminus\overline{B(0, a)}.$
В силу неравенства (\ref{eq4.4.1}), строгого убывания функции
$\varphi,$ а также предположения о том, что $C=1,$ имеем
\begin{equation}\label{eq4.4.4}
f(V)\subset B\left(0, \varphi^p(a)\right)\,.
\end{equation}
Поскольку $z_0:=y_0+re\in \overline{B(y_0,
r)}=f\left(\overline{U(x_j, f, r)}\right),$ $j=1,\ldots, k_0,$ где
$e$ -- единичный вектор, найдётся конечная последовательность точек
$\widetilde{x_1},\ldots,\widetilde{x_{k_0}},$ $\widetilde{x_j}\in
\overline{U_j},$ $1\le j\le k_0,$ такая что
$f(\widetilde{x_j})=z_0.$ Заметим, что
$k_0\le\sum\limits_{j=1}^{k_0} i(\widetilde{x_j},\,f)=m^{\,\prime}.$
Заметим, что $z_0\in f(V).$ Обозначим через $H$ полусферу
$H=\left\{e\in {\Bbb S}^{n-1}: (e, y_0)>0\right\},$
через $\Gamma^{\,\prime}$ -- семейство всех кривых $\beta:\left[r,
\varphi^p(a)\right)\rightarrow {\Bbb R}^n$ вида $\beta(t)=y_0+te,$
$e\in H,$ $t\in [r, \varphi^p(a)),$ а через $\Gamma$ максимальную
последовательность под\-ня\-тий кривой $\beta$ при отображении $f$
относительно области $V$ с началом в точках
$\widetilde{x_1},\ldots,\widetilde{x_{k_0}},$ $\widetilde{x_j}\in
\overline{U_j},$ $1\le j\le k_0,$ состоящую из $m^{\,\prime}$
кривых, $m^{\,\prime} =\sum\limits_{j=1}^{k_0}
i(\widetilde{x_j},\,f),$ которая существует в силу предложения
\ref{pr4.2.1}. Заметим, что ввиду (\ref{eq4.4.4}) при любом
фиксированном $e\in H,$ каждой кривой $\beta=y_0+te$ и каждого
максимального её поднятия $\alpha(t):[r, c)\rightarrow V$ с началом
в точке $\widetilde{x_{j_0}},$ $\alpha\in \Gamma,$  $1\le j_0\le
k_0,$ существует последовательность $r_k\in [r, c),$ такая что
$r_k\rightarrow c-0$ при $k\rightarrow \infty$ и ${\rm
dist\,}(\alpha(r_k),
\partial V)\rightarrow 0$ при $k\rightarrow \infty.$
Кроме того, заметим, что ситуация, когда ${\rm dist\,}(\alpha(r_k),
S(0, \delta))\rightarrow 0$ при $k\rightarrow \infty,$ исключена.
Действительно, пусть эта ситуация имеет место. Тогда найдутся
$p_2\in S(0, \delta)$ и подпоследовательность номеров $k_l,$ $l\in
{\Bbb N},$ такие, что $\alpha(r_{k_l})\rightarrow p_2$ при
$l\rightarrow \infty.$ Отсюда, по непрерывности $f,$ получаем, что
$\beta(r_{k_l})\rightarrow f(p_2)$ при $l\rightarrow \infty,$ что
невозможно ввиду соотношения (\ref{eq4.4.3}), поскольку, при каждом
фиксированном $e\in H$ и $t\in \left[r, \varphi^p(a)\right),$ имеем
$|\beta(t)|=|y_0+te|=\sqrt{|y_0|^2 + 2t(y_0, e)+t^2}\ge |y_0|> R$ по
выбору $y_0.$

Из сказанного выше следует, что найдётся последовательность $r_k\in
[r, c),$ такая что $r_k\rightarrow c-0$ при $k\rightarrow \infty,$ и
$\alpha(r_k)\rightarrow p_3\in S(0, a).$ Кроме того, каждая такая
кривая $\alpha\in \Gamma$ пересекает сферу $S(0, d),$ поскольку,
согласно построению, $\alpha$ имеет начало вне шара $B(0, d).$

Из сказанного следует, что при всех достаточно малых $\varepsilon$
кривая $\alpha$ содержит замкнутую подкривую $\alpha^{\,\prime},$
пересекающую сферы $S(0, d)$ и $S(0, a+\varepsilon).$ Тогда по
теореме \ref{th3.1} и с учётом замечания \ref{rem1}
$$M(\Gamma^{\,\prime} )\le
\frac{1}{m^{\,\prime}}\quad\int\limits_D D_f\left(x,\,0\right)\cdot
\rho^n (|x|) dm(x)\le$$
\begin{equation}\label{eq4.4.5} \le \frac{1}{k_0}\int\limits_D
D_f(x, 0)\cdot \rho^n (|x|) dm(x)
\end{equation}
для каждой измеримой по Лебегу функции $\rho$ такой, что
$\int\limits_{a+\varepsilon}^{d}\rho(t)dt\ge 1.$
Из условия $I(a, d)\rightarrow \infty$ при $a\rightarrow 0$
вытекает, что $I(a, d)>0$ при малых $a.$ Рассмотрим функцию
$$\rho_{a, \varepsilon}(t)= \left\{
\begin{array}{rr}
\psi(t)/I(a+\varepsilon, d), &   t\in (a+\varepsilon, d),\\
0,  &  t\in {\Bbb R}^n \setminus (a+\varepsilon, d)
\end{array}
\right.\,, $$
где величина $I(a+\varepsilon, d)$ определена также, как в
(\ref{eq11}), а $\psi$ -- функция из условия леммы. Заметим, что
$$\int\limits_{a+\varepsilon}^d\rho_{a, \varepsilon}(t)dt=\frac{1}{I(a+\varepsilon, d)}
\int\limits_{a+\varepsilon}^d \psi(t)dt=1\,,$$ в таком случае, ввиду
условий (\ref{eq4.4.2}) и (\ref{eq4.4.5}) получаем, что
$$M(\Gamma^{\,\prime} )\quad\le\quad \frac{1}{k_0\cdot I^n(a+\varepsilon,
d)}\quad\int\limits_{a+\varepsilon<|x|<d} D_f(x, 0)\cdot \psi^n
(|x|)dm(x)\le$$
\begin{equation}\label{eq4.4.8}\le \frac{2}{k_0\cdot I^n(a+\varepsilon,
\varepsilon_0)}\quad\int\limits_{a+\varepsilon<|x|<\varepsilon_0}
Q(x)\cdot \psi^n (|x|)dm(x)\end{equation} при всех $a+\varepsilon\in
(0, d_1)$ и некотором $d_1,$ $d_1\le d,$ поскольку
$I^n(a+\varepsilon,d)\rightarrow\infty$ при
$a+\varepsilon\rightarrow 0.$ Снова, из (\ref{eq4.4.2}) и
(\ref{eq4.4.8}) получаем, что при $a+\varepsilon\in (0, d_1)$
\begin{equation}\label{eq12}
M(\Gamma^{\,\prime} )\le
\frac{2A}{k_0\left(\log\varphi(a+\varepsilon)\right)^{n-1}}\,.
\end{equation}
С другой стороны, в силу \cite[замечание~7.7]{Va},
\begin{equation}\label{eq13}
M(\Gamma^{\prime})=\frac{1}{2}\frac{\omega_{n-1}}
{\left(\log\frac{\varphi^p(a+\varepsilon)}{r}\right)^{n-1}}\,.
\end{equation}
Тогда из неравенства (\ref{eq12}) и равенства (\ref{eq13}) получаем,
что
$$\frac{1}{2}\frac{\omega_{n-1}}
{\left(\log\frac{\varphi^p(a+\varepsilon)}{r}\right)^{n-1}}\le
\frac{2A}{k_0\left(\log\varphi(a+\varepsilon)\right)^{n-1}}\,,$$
откуда
$$\frac{1}{r^{{
\left(\frac{2}{\omega_{n-1}}\right)^{\frac{1}{n-1}}}}}\ge
\left(\varphi(a+\varepsilon)\right)^{{\left(\frac{k_0}{2A}\right)^{\frac{1}{n-1}}}-p{
\left(\frac{2}{\omega_{n-1}}\right)^{\frac{1}{n-1}}}}\,.$$ Поскольку
по выбору $k_0>\frac{4Ap^{n-1}}{\omega_{n-1}},$ в правой части
последнего соотношения величина $\varphi(a+\varepsilon)$ берётся в
некоторой положительной степени. Переходя здесь к пределу при
$a+\varepsilon\rightarrow 0$ и учитывая, что по условию леммы
$\varphi(a+\varepsilon)\rightarrow\infty$ при
$a+\varepsilon\rightarrow 0,$ получаем, что
$$\frac{1}{r^{{
\left(\frac{2}{\omega_{n-1}}\right)^{\frac{1}{n-1}}}}}\ge
\infty\,,$$ что невозможно. Полученное противоречие означает, что
точка $b=0$ не может быть существенно особой для отображения $f.$
\end{proof}

\medskip
\medskip\, Отдельный случай леммы \ref{lem4.4.1} представляет собой
ситуация, когда $I(\varepsilon, \varepsilon_0)$ $\le M\cdot
\log\varphi(\varepsilon)$ при некоторой постоянной $M>0$ и
$\varepsilon\rightarrow 0.$ Покажем, что в этом случае при указанных
в формулировке леммы \ref{lem4.4.1} отображениях, предполагающихся
ограниченными, имеет место явная оценка искажения хордального
расстояния.

\medskip
Следующее утверждение может быть получено как следствие из
\cite[лемма~3.3]{Sev$_3$} и оценки (\ref{eq5}) при $m=1.$

\medskip
\begin{lemma}\label{pr4.4.1}
{\sl\, Предположим, что $b\in D,$ $f:D\rightarrow B(0, R)$ --
открытое дискретное дифференцируемое почти всюду отображение, $f\in
ACP^{-1},$ обладающее $N$ и $N^{\,-1}$-свой\-ст\-вами Лузина, при
этом, существуют измеримая по Лебегу функция $Q:D\rightarrow [1,
\infty],$ числа $\varepsilon_0>0,$ $\varepsilon_0<{\rm
dist\,}\left(b,
\partial D\right),$ и $A>0,$ такие, что $D_f(x, b)\le Q(x)$ почти всюду в $D,$ при этом,
при $\varepsilon\rightarrow 0$ имеют место соотношения
(\ref{eq4.4.2})--(\ref{eq11}). Пусть, кроме того, существует
постоянная $M>0$ и $\varepsilon_1>0,$ $\varepsilon_1\in (0,
\varepsilon_0),$ такие что при всех $\varepsilon\in (0,
\varepsilon_1)$ выполнено условие
\begin{equation} \label{eq4.4.9}
I(\varepsilon, \varepsilon_0)\le M\cdot\log\varphi(\varepsilon)\,,
\end{equation}
где $I(\varepsilon, \varepsilon_0)$ определяется соотношением
(\ref{eq11}), а $\varphi:(0, \infty)\rightarrow [0, \infty)$ --
некоторая функция. Тогда при всех $x\in B(b, \varepsilon_1)$ имеет
место оценка
\begin{equation}\label{eq15}
|f(x)-f(b)|\le \frac{\alpha_n(1+R^2)}{\delta}\exp\{-\beta_n
I\left(|x-b|, \varepsilon_0\right)\}\,,
\end{equation}
где постоянные $\alpha_n$ и
$\beta_n=\left(\frac{\omega_{n-1}}{AM^{n-1}}\right)^{1/(n-1)}$
зависят только от $n,$ а $\delta$ -- от $R.$ }
\end{lemma}

\begin{proof} В первую очередь, заметим, что $f$ удовлетворяет соотношению вида
(\ref{eq5}) при $m=1.$ Из соотношения (\ref{eq4.4.2}) с учётом
(\ref{eq4.4.9}) следует, что при $\varepsilon\in (0, \varepsilon_1)$
\begin{equation}\label{eq1AAA}
\int\limits_{\varepsilon<|x-b|<\varepsilon_0}Q(x)\cdot\psi^n(|x-b|)
\ dm(x)\le AM^{n-1}\cdot I(\varepsilon, \varepsilon_0)\,.
\end{equation}
Поскольку $|f(x)-f(b)|\le (1+R^2)\cdot h(f(x), f(b)),$ из
(\ref{eq1AAA}) и \cite[лемма~5]{Sev$_3$} вытекает соотношение
(\ref{eq15}).
\end{proof}

\medskip
\, Мы показали, что при определённых условиях изолированная
особенность отображений, более общих, чем отображения с конечным
искажением длины, является либо полюсом, либо устранимой особой
точкой. Однако, как мы увидим ниже, при ещё более сильных
ограничениях на рост отображения $f$ ситуация, когда изолированная
точка границы является полюсом, также исключена. Подобный результат
может быть получен как следствие из оценки расстояния (\ref{eq15}).
Имеет место следующее утверждение.

\medskip
\begin{corollary}\label{cor4.4.1}{\sl\, Предположим, что в условиях леммы \ref{lem4.4.1},
помимо соотношений (\ref{eq4.4.2}) и (\ref{eq11}) имеет место
условие (\ref{eq4.4.9}), а вместо условия (\ref{eq4.4.1}) имеет
место более сильное предположение:
\begin{equation}\label{eq16AA}
\lim\limits_{x\rightarrow b}|f(x)|\cdot \exp\{-\beta_n I\left(|x-b|,
\varepsilon_0\right)\}=0\,,
\end{equation}
где $\beta_n=\left(\frac{\omega_{n-1}}{AM^{n-1}}\right)^{1/(n-1)}.$
Тогда точка $x=b$ является устранимой изолированной особой точкой
отображения $f.$ }
\end{corollary}

\begin{proof}
Можно считать, что $b=0.$ По лемме \ref{lem4.4.1}, точка $b$ не
может быть существенно особой для $f.$ Предположим, что $b=0$
является для отображения $f$ полюсом. Тогда рассмотрим композицию
отображений $h=g\circ f,$ где $g(x)=\frac{x}{|x|^2}$ -- инверсия
относительно единичной сферы ${\Bbb S}^{n-1}.$ Заметим, что $h\in
ACP^{-1},$ обладает $N$ и $N^{\,-1}$-свой\-ст\-вами Лузина, при
этом, $D_f(x, 0)=D_h(x, 0)$ и $h(0)=0.$ Кроме того, в некоторой
окрестности нуля отображение $h$ (по построению) является
ограниченным. В таком случае, найдутся $\varepsilon_0>0$ и $R>0,$
такие, что $|h(x)|\le R$ при $|x|<\varepsilon_0.$ Следовательно,
возможно применение леммы \ref{pr4.4.1}. По неравенству
(\ref{eq15}), $|h(x)|=\frac{1}{|f(x)|}\le
\frac{\alpha_n(1+R^2)}{\delta}\exp\{-\beta_n I\left(|x|,
\varepsilon_0\right)\}.$ Отсюда следует, что
$$|f(x)|\cdot \exp\{-\beta_n I\left(|x|,
\varepsilon_0\right)\}\ge \frac{\delta}{\alpha_n(1+R^2)}\,.$$
Однако, последнее соотношение противоречит (\ref{eq16AA}).

Полученное противоречие доказывает, что точка $b=0$ является
устранимой для отображения $f.$
\end{proof}

Сформулируем и докажем теперь основные результаты настоящего
раздела. Имеет место следующее утверждение.

\begin{theorem}\label{cor4.4.2}{\sl\, Пусть $b\in D$ и
$f:D\setminus\{b\}\rightarrow {\Bbb R}^n$ -- открытое дискретное
дифференцируемое почти всюду отображение, $f\in ACP^{-1},$
обладающее $N$ и $N^{\,-1}$-свой\-ст\-вами Лузина, при этом $D_f(x,
b)\le Q(x)$ почти всюду и для некоторых $\varepsilon_0>0,$
$\varepsilon_0<{\rm dist\,}(b,
\partial D)$ и $\varepsilon_0^{\,\prime}\in (0, \varepsilon_0)$ имеет место условие
\begin{equation}\label{eq14}
\int\limits_{\varepsilon<|x-b|<\varepsilon_0}\frac{Q(x)}{|x-b|^n
\log^n\frac{1}{|x-b|}}\ dm(x)\le A\cdot\log{\frac{\log{\frac{1}
{\varepsilon}}}{\log{\frac{1}{\varepsilon_0}}}}\qquad\forall\,\,\varepsilon\in
(0, \varepsilon_0^{\,\prime})\,,
\end{equation}
кроме того,
\begin{equation}\label{eq16A}
\lim\limits_{x\rightarrow
b}\frac{|f(x)|}{\left(\log\frac{1}{|x-b|}\right)^{\beta_n}}=0\,,
\end{equation}
где $\beta_n=\left(\frac{\omega_{n-1}}{A}\right)^{1/(n-1)}.$ Тогда
точка $x=b$ является устранимой для отображения $f.$ }
\end{theorem}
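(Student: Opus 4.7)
The theorem is an explicit specialization of Corollary \ref{cor4.4.1} to the particular choice of weights
\[
\psi(t):=\frac{1}{t\log(1/t)},\qquad \varphi(\varepsilon):=\log\frac{1}{\varepsilon}.
\]
With these choices one directly computes
\[
I(\varepsilon,\varepsilon_0)=\int_\varepsilon^{\varepsilon_0}\frac{dt}{t\log(1/t)}=\log\frac{\log(1/\varepsilon)}{\log(1/\varepsilon_0)},
\]
so $\log\varphi(\varepsilon)=\log\log(1/\varepsilon)$ and $I(\varepsilon,\varepsilon_0)/\log\varphi(\varepsilon)\to 1$ as $\varepsilon\to 0$. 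In particular $\exp\{-\beta_n\,I(|x-b|,\varepsilon_0)\}=(\log(1/\varepsilon_0))^{\beta_n}/(\log(1/|x-b|))^{\beta_n}$, so the growth bound (\ref{eq15}) and the asymptotic (\ref{eq16AA}) are identified precisely with the conclusion (\ref{eq16A}).

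The first step is to verify that hypothesis (\ref{eq14}) supplies the general integral condition (\ref{eq4.4.2}) of Lemma \ref{pr4.4.1} with the above $\psi$ and $\varphi$. Since $I\sim\log\varphi$, we have $I^{\,n}/(\log\varphi)^{n-1}\sim I$, and therefore (\ref{eq14}) reads as (\ref{eq4.4.2}) up to a factor $1+o(1)$ in the constant $A$. The finiteness condition (\ref{eq11}) is immediate, and (\ref{eq4.4.9}) holds with $M$ arbitrarily close to $1$ for small $\varepsilon$; in the limit $M\to 1$, the constant $\beta_n=(\omega_{n-1}/(AM^{n-1}))^{1/(n-1)}$ of Lemma \ref{pr4.4.1} becomes the stated $(\omega_{n-1}/A)^{1/(n-1)}$.

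With all structural hypotheses in place, removability is established by mimicking the proof of Corollary \ref{cor4.4.1} in this concrete setting. First, Lemma \ref{lem4.4.1}, applied with the polynomial growth function $\varphi^p(t)=(\log(1/t))^p$, rules out the case that $b$ is essentially singular. Second, to exclude the pole case, one applies the inversion trick $h:=g\circ f$ with $g(x)=x/|x|^2$: then $h$ is bounded near $b$, extends by $h(b)=0$, and inherits the hypotheses ($D_h(x,b)=D_f(x,b)$, $ACP^{-1}$, $N$, $N^{-1}$). Lemma \ref{pr4.4.1} applied to $h$ gives
\[
\frac{1}{|f(x)|}=|h(x)|\le\frac{\alpha_n(1+R^2)}{\delta}\,\frac{(\log(1/\varepsilon_0))^{\beta_n}}{(\log(1/|x-b|))^{\beta_n}},
\]
equivalently $|f(x)|\ge c(\log(1/|x-b|))^{\beta_n}$, which is incompatible with the upper bound coming from Lemma \ref{pr4.4.1} applied to $f$ on the set where $f$ is bounded. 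Hence $b$ is removable; extending $f$ continuously and applying Lemma \ref{pr4.4.1} one last time to the extended $f$ on $B(b,\delta)$ delivers the asymptotic (\ref{eq16A}).

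The main obstacle is the apparent circularity in invoking Lemma \ref{lem4.4.1}, whose hypothesis (\ref{eq4.4.1}) is a bound of the same flavour as what Theorem \ref{cor4.4.2} concludes. The resolution is the standard bootstrap already used in Corollary \ref{cor4.4.1}: each of the three possible singularity types (removable, pole, essential) is analyzed separately, so that the polynomial-growth hypothesis of Lemma \ref{lem4.4.1} is either trivially satisfied (in the removable case, by $f$ being bounded) or replaced, via the inversion $g$, by an equivalent bound on $h$ to which Lemma \ref{pr4.4.1} applies; the modulus inequality of Theorem \ref{th3.1} combined with (\ref{eq14}) then forces the only consistent option to be removability with the quantitative bound (\ref{eq16A}).
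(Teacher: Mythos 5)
Your overall route --- specialize Corollary \ref{cor4.4.1} to $\psi(t)=1/(t\log(1/t))$, $\varphi(t)=\log(1/t)$, compute $I(\varepsilon,\varepsilon_0)=\log\frac{\log(1/\varepsilon)}{\log(1/\varepsilon_0)}$, and identify (\ref{eq16AA}) with (\ref{eq16A}) --- is exactly the paper's proof, which consists of precisely this reduction followed by a citation of Corollary \ref{cor4.4.1}. The genuine gap is that you have misread the logical structure of the statement: in Theorem \ref{cor4.4.2} the relation (\ref{eq16A}) is a \emph{hypothesis} (the clause ``moreover, (\ref{eq16A})'' continues the list of assumptions; the conclusion, introduced by ``then'', is only that $b$ is removable). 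You instead call (\ref{eq16A}) ``the conclusion'' and try to have your argument ``deliver'' it, and this manufactures the very circularity you flag in your last paragraph --- a circularity absent from the intended reading and not removed by your proposed resolution. Concretely: (i) to invoke Lemma \ref{lem4.4.1} and exclude an essential singularity you need the growth bound (\ref{eq4.4.1}) with $\varphi(t)=\log(1/t)$; under the correct reading this is immediate from the assumed (\ref{eq16A}) with $p=\beta_n$, whereas your three-case analysis supplies no growth bound at all in the essential-singularity case, where $f$ is neither bounded nor tending to $\infty$, so neither branch of your dichotomy applies; (ii) in the pole case the lower bound $|f(x)|\ge c\,(\log(1/|x-b|))^{\beta_n}$ obtained from Lemma \ref{pr4.4.1} applied to $h=g\circ f$ must be contradicted with the assumed limit (\ref{eq16A}), not with ``the upper bound coming from Lemma \ref{pr4.4.1} applied to $f$ on the set where $f$ is bounded'': if $b$ is a pole, $f$ is unbounded near $b$ and that lemma does not apply to $f$ there. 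Once (\ref{eq16A}) is read as a hypothesis, your first paragraph plus the citation of Corollary \ref{cor4.4.1} is the entire proof, and your third and fourth paragraphs are both unnecessary and, as written, invalid.

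A secondary remark on constants: you correctly notice that (\ref{eq14}) reproduces (\ref{eq4.4.2}) only up to the factor $\left(I/\log\varphi\right)^{n-1}=1+o(1)$, and that (\ref{eq4.4.9}) holds with $M=1$ once $\varepsilon_0\le e^{-1}$; the paper passes over this silently, so your attention to it is welcome. But the phrase ``in the limit $M\to1$'' hides a real issue: $\beta_n=\left(\omega_{n-1}/(AM^{n-1})\right)^{1/(n-1)}$ enters the \emph{hypothesis} (\ref{eq16AA}), and enlarging $A$ or $M$ makes that hypothesis stronger, not weaker, so the passage to the limit needs justification rather than assertion. This blemish is shared with the paper and is not the main defect of your proposal.
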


\medskip
\begin{proof} Полагаем $\varphi(t):=\log\frac{1}{t}$ и $\psi(t):=\frac{1}{t\log\frac{1}{t}}.$
Отметим, что в этом случае выполнено соотношение (\ref{eq4.4.9}) при
$M=1$ и соотношение (\ref{eq16AA}), которое соответствует
соотношению (\ref{eq16A}) при указанном выборе функций $\varphi$ и
$\psi$ (где, как и прежде, $I(\varepsilon,
\varepsilon_0)=\int\limits_{\varepsilon}^{\varepsilon_0}\psi(t)dt$).
Кроме того, выполнены соотношения (\ref{eq4.4.2})--(\ref{eq11}).
Тогда необходимое заключение следует из следствия \ref{cor4.4.1}.
$\Box$
\end{proof}

Для дальнейшего изложения крайне важным является следующее
утверждение (см. \cite[следствие~2.3]{IR}, см. также
\cite[лемма~6.1, гл.~6]{MRSY}).

\begin{proposition}\label{pr4*!}{\sl\,
Пусть $\varphi: D\rightarrow {\Bbb R}, n\ge 2,$ -- неотрицательная
функция, имеющая конечное среднее колебание в точке $0\in D $. Тогда
\begin{equation}\label{equa17}
\int\limits_{\varepsilon<|x|< {\varepsilon_0}}\frac{\varphi (x)\,
dm(x)} {\left(|x| \log \frac{1}{|x|}\right)^n} = O \left(\log\log
\frac{1}{\varepsilon}\right)
\end{equation}
при $\varepsilon \rightarrow 0 $ и для некоторого $\varepsilon_0>0,$
$\varepsilon_0 \le {\rm dist}\,\left(0,\partial D\right).$
}
\end{proposition}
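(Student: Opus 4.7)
The claim is a standard property of $FMO$ functions at a point (an analogue of \cite[����.~6]{MRSY}). The plan is to decompose the annular region into exponential dyadic shells, apply the $FMO$ hypothesis to control the oscillation of $\varphi$ on each shell, and handle the possible growth of the spherical means by a telescoping argument.

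\emph{Step 1 (dyadic decomposition).} Set $r_k = \varepsilon_0\, e^{-k}$ and $A_k = \{x:\, r_{k+1}\le |x|<r_k\}$. Given small $\varepsilon>0$, choose $N=N(\varepsilon)$ with $r_{N+1}\le \varepsilon<r_N$, so $N\asymp \log(\varepsilon_0/\varepsilon)$. On $A_k$ one has $|x|\log(1/|x|)\asymp e^{-k}\,k$ and $m(A_k)\le C e^{-kn}$. Consequently
$$
\int\limits_{\varepsilon<|x|<\varepsilon_0} \frac{\varphi(x)\,dm(x)}{\left(|x|\log\tfrac{1}{|x|}\right)^n}
\;\le\; C\sum_{k=k_0}^{N}\frac{e^{kn}}{k^n}\int_{A_k}\varphi(x)\,dm(x).
$$

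\emph{Step 2 ($FMO$ bound on each shell).} Let $\overline{\varphi}_k:=\overline{\varphi}_{r_k}$ denote the mean of $\varphi$ over $B(0,r_k)$. Split
$$
\int_{A_k}\varphi\,dm \;=\; \int_{A_k}\bigl(\varphi-\overline{\varphi}_k\bigr)\,dm\;+\;\overline{\varphi}_k\cdot m(A_k).
$$
Since $\varphi\in FMO(0)$, for all sufficiently large $k$ one has $\int_{B(0,r_k)}|\varphi-\overline{\varphi}_k|\,dm\le C\,\Omega_n r_k^n$, and hence the oscillation contribution to the sum in Step~1 is at most $C\sum_{k\ge k_0} k^{-n}$, which is bounded.

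\emph{Step 3 (telescoping the means).} The key remaining estimate is $|\overline{\varphi}_{k+1}-\overline{\varphi}_k|\le C$, which follows from
$$
|\overline{\varphi}_{k+1}-\overline{\varphi}_k|
\;\le\;\frac{1}{\Omega_n r_{k+1}^n}\int_{B(0,r_{k+1})}|\varphi-\overline{\varphi}_k|\,dm
\;\le\;\frac{e^n}{\Omega_n r_k^n}\int_{B(0,r_k)}|\varphi-\overline{\varphi}_k|\,dm\;\le\;C,
$$
where the fixed ratio $r_k/r_{k+1}=e$ plays the role of the doubling constant. Telescoping yields $|\overline{\varphi}_k|\le C\,k$, so the mean part contributes
$$
\sum_{k=k_0}^{N}\frac{e^{kn}}{k^n}\cdot Ck\cdot e^{-kn}\;=\;C\sum_{k=k_0}^{N}k^{-(n-1)},
$$
which equals $O(\log N)=O(\log\log(1/\varepsilon))$ when $n=2$ and $O(1)$ when $n\ge 3$. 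Combined with Step~2 this gives (\ref{equa17}).

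\emph{Main obstacle.} The principal technical point is the uniform bound in Step~3: the $FMO$ definition provides only a $\limsup$ at $0$, so one has to fix $k_0$ large enough that $\frac{1}{\Omega_n r_k^n}\int_{B(0,r_k)}|\varphi-\overline{\varphi}_k|\,dm$ stays under a single constant for every $k\ge k_0$. The finitely many initial terms $k<k_0$ contribute a quantity independent of $\varepsilon$ and are absorbed into the implicit $O$-constant. Once this uniform control is in place, the geometric structure of the shells together with the doubling ratio $e$ forces the mean growth to be at most linear in $k$, which is precisely what makes the final sum fit within the $\log\log(1/\varepsilon)$ bound.
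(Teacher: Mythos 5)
Your proof is correct and is exactly the standard argument for this lemma: the paper itself gives no proof of Proposition \ref{pr4*!}, referring instead to \cite[Corollary~2.3]{IR} and \cite[Lemma~6.1]{MRSY}, where the same exponential/dyadic shell decomposition, the $FMO$ oscillation bound on each ball $B(0,r_k)$, and the telescoping estimate $|\overline{\varphi}_k|\le Ck$ appear. The only point to tidy is Step~1: bounding the weight pointwise and pulling it out of the integral as you do requires $\varphi\ge 0$ (for signed $\varphi$ one should estimate the absolute value of the integral, which your Steps~2--3 already handle), and this is harmless here since the proposition is applied to $Q\ge 1$.
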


{\it Доказательство теоремы \ref{th1}.} Выберем в лемме
\ref{lem4.4.1} в качестве $\varphi(t):=\log\frac{1}{t}$ и
$\psi(t):=\frac{1}{t\log\frac{1}{t}}.$ Тогда доказательство теоремы
\ref{th1} вытекает из леммы \ref{lem4.4.1} и оценки (\ref{eq14}),
справедливой для некоторого $\varepsilon_0>0$ и произвольной функции
$Q\in FMO(b)$ (см. предложение \ref{pr4*!}), а также леммы
\ref{lem4.4.1}. Первая часть теоремы \ref{th1} доказана.

Доказательство второй части теоремы \ref{th1} немедленно следует из
теоремы \ref{cor4.4.2}, поскольку, как уже было отмечено выше,
условия вида (\ref{eq14}) выполняются для произвольных функций
класса $FMO$ в соответствующей точке. $\Box$

\medskip
{\it Доказательство теоремы \ref{th2}.} В лемме \ref{lem4.4.1}
полагаем
$$\psi(t)=1/tq_b^{1/(n-1)}(t), \quad \varphi(t)=
\exp\left\{\int\limits_{t}^{\varepsilon_0}\frac{dr}{rq_{b}^{1/(n-1)}(r)}\right\}\,.$$
Поскольку функция $Q(x)$ по условию локально интегрируема, по
теореме Фубини $q_b(r)<\infty$ при почти всех $r\in (0,
\varepsilon_0),$ откуда вытекает строгое убывание функции $\varphi$
и положительность функции $\psi.$ Кроме того, по теореме Фубини
имеем
$\int\limits_{\varepsilon<|x-b|<\varepsilon_0}Q(x)\cdot\psi^n(|x-b|)
dm(x)=\int\limits_{\varepsilon}^{\varepsilon_0}\int\limits_{S(b, r)}
Q(x)\cdot\psi^n(|x-b|)\,d{\mathcal{H}^{n-1}}\,dr =
\omega_{n-1}\cdot\int\limits_{\varepsilon}^{\varepsilon_0}r^{n-1}\psi^n(r)
q_b(r)dr = \omega_{n-1}\cdot I(\varepsilon, \varepsilon_0)=
\omega_{n-1}\cdot\log\varphi(\varepsilon).$
Отсюда, в частности, следует, что выполнено условие (\ref{eq4.4.9})
при $M=1.$ Оставшаяся часть утверждения следует из леммы
\ref{lem4.4.1} и следствия \ref{cor4.4.1}. $\Box$

\section{Некоторые примеры и замечания}
В качестве приложений полученных в работе результатов, укажем, в
частности, на один из подклассов отображений с конечным искажением
длины, для которых утверждения теорем \ref{th3.1}--\ref{th2} имеют
место (см. \cite[теорема~1]{Sev$_4$}).
\begin{theorem}\label{th3}
{\sl\,Утверждения теорем \ref{th3.1}--\ref{th2} выполняются, если
отображение $f\in W_{loc}^{1, n}(D)$ является открытым, дискретным,
мера его множества точек ветвления равна нулю и, кроме того,
внутренняя дилатация $K_I(x, f)$ отображения $f$ является локально
суммируемой в области $D.$ В частности, заключения теорем
\ref{th3.1}--\ref{th2} выполнены, если $f$ -- отображение с
ограниченным искажением.}
\end{theorem}
Заметим, что дилатация $D_f(x, x_0),$ определённая соотношением
(\ref{equa11}), заведомо не может обслуживать все семейства кривых в
области $D$ подобно соотношению (\ref{equa1}) (т.е., в (\ref{equa1})
величина $K_I(x, f)$ не может быть заменена на $D_f(x, x_0)$), так
как даже при $m=1,$ в этом случае, должно быть $D_f(x, x_0)\ge 1$
почти всюду (см. \cite[Следствие~4.1]{SevSal}). В то же время,
$D_f(x, x_0)$ может быть меньше единицы на множестве положительной
меры. По этому поводу, рассмотрим для простоты случай $n=2.$
Согласно \cite[лемма~2.10]{RSY} для дифференцируемого и
невырожденного отображения $f:D\rightarrow {\Bbb C}$ в точке $z\in
D\subset {\Bbb C}$
$$D_f(z, z_0)=\frac{\left| 1-
\frac{\overline{z-z_0}}{z-z_0}\mu(z)\right|^2}{1-|\mu(z)|^2}\,,$$
где, как обычно, $z=x + iy,$ $\overline{\partial} f=
f_{\overline{z}} = \left(f_x + if_y\right)/2$ и $\partial f = f_z =
\left(f_x - if_y\right)/2$ и $\mu(z)=\mu_f(z)=f_{\overline{z}}/f_z,$
когда $f_z \ne 0,$ и $\mu(z)=0$ в противном случае. Обозначим через
$\frak{F}_Q$ класс всех $Q$-ква\-зи\-кон\-фор\-м\-ных автоморфизмов
$f$ расширенной комплексной плоскости, нормированных условиями
$f(0)=0,$ $f(1)=1$ и $f(\infty)=\infty.$ (Отображение
$f:D\rightarrow {\Bbb R}^n$ будем называется $Q$-квазиконформным,
если $f$ -- гомеоморфизм класса $W_{loc}^{1, n},$ для которого
функция $K_I(x, f)\le Q$ при почти всех $x\in D$). Пусть $h\in
\frak{F}_Q$ имеет комплексную характеристику вида
$\mu(z)=k(|z|)\frac{z}{\overline{z}},$ где $k(\tau):{\Bbb
R}\rightarrow {\Bbb B}^2$ -- измеримая функция. Тогда
\begin{equation}\label{eq1.9.1}
h(z)=\frac{z}{\overline{z}}\exp\left\{\int\limits_{1}^{|z|}\frac{1+k(\tau)}
{1-k(\tau)}\frac{d\tau}{\tau}\right\}\,,\end{equation}
см. \cite[предложение~6.5, гл.~6]{GR$_1$}. Полагаем $k(\tau)=\tau$
при $|\tau|<1$ и $k(\tau)=0$ при $|\tau|\ge 1,$ и комплексную
характеристику $\mu,$ определённую по правилу
$\mu(z)=k(|z|)\frac{z}{\overline{z}}.$ Ввиду сказанного выше,
отображение $h(z),$ заданное соотношением (\ref{eq1.9.1}) при
выбранной функции $k,$ является квазиконформным. Рассмотрим точку
$z_0=0.$ Посредством непосредственного подсчёта убеждаемся, что
$D_f(z, 0)=\frac{1-|z|}{1+|z|}$ при $z\in {\Bbb B}^2$ и $D_f(z,
0)=1$ при $z\not\in {\Bbb B}^2.$ Таким образом, отображение
$\widetilde{h}:=h|_{{\Bbb B}^2}$ имеет угловую дилатацию $D_f(z,
0),$ всюду меньшую единицы в единичном круге.

Нетрудно привести пример ограниченного отображения
$f:D\setminus\{b\}\rightarrow {\Bbb R}^n$ с конечным искажением
длины, для которого соответствующая угловая дилатация $Q:=D_f(x, b)$
удовлетворяет условию $Q\in FMO(b),$ равно как и условиям
$\int\limits_{0}^{\varepsilon_0}\frac{dt}{tq_{b}^{1/(n-1)}(t)}=\infty,$
$\int\limits_{\varepsilon}^{\varepsilon_0}\frac{dt}{tq_{b}^{1/(n-1)}(t)}<\infty$
при малых $\varepsilon>0,$ однако, этим же условиям, в то же время,
не удовлетворяет функция $\widetilde{Q}=K_I(x, f),$ где $K_I(x, f)$
определена в (\ref{equa16}). (Прямыми вычислениями можно показать,
что при $n=2$ всегда $K_I(z, f)=\frac{1+|\mu(z)|}{1-|\mu(z)|},$ где
$\mu(z)=\mu_f(z)=f_{\overline{z}}/f_z,$ когда $f_z \ne 0,$ и
$\mu(z)=0$ в противном случае). Для этой цели снова рассмотрим
случай $n=2$ и $D:={\Bbb B}^2\setminus\{0\}.$ Положим $b=0,$
$\mu(z)=k(|z|)\frac{z}{\overline{z}},$ тогда соответствующая угловая
дилатация $D_f(z, 0)$ равна: $D_f(z, 0)=\frac{1-|z|}{1+|z|}.$
Полагая $Q(z):=D_f(z, 0)$ и вычисляя $q_0(z)$ по правилу
(\ref{eq17}), заметим, что,
$\int\limits_{0}^{\varepsilon_0}\frac{dt}{tq_{0}(t)}=\infty$ и
$\int\limits_{\varepsilon}^{\varepsilon_0}\frac{dt}{tq_{0}(t)}<\infty$
при достаточно малых $\varepsilon>0.$ Кроме того, $D_f(z, 0)$ в этом
случае просто ограничена, что немедленно влечет условие $D_f(z,
0)\in FMO(0).$ Полагаем теперь
$k(r):=\frac{1+r^{\beta}}{1-r^{\beta},}$ где $\beta$ произвольное
фиксированное число из интервала $(0, 1).$ Поскольку
$\mu(z)=k(|z|)\frac{z}{\overline{z}},$ то  $K_I(z,
f)=\frac{1+|\mu(z)|}{1-|\mu(z)|}=\frac{1+k(|z|)|}{1-k(|z|)|}=\frac{1}{|z|^{\beta}}.$
Полагая $\widetilde{Q}(z):=K_I(z, f),$ мы видим, что при каждом
$\varepsilon_0>0, \varepsilon_0<1,$
$\int\limits_{0}^{\varepsilon_0}\frac{dr}{r\widetilde{q}_0(r)}=
\int\limits_{0}^{\varepsilon_0}\frac{dr}{r^{1-\beta}}<\infty.$
Используя предложение \ref{pr4*!}, покажем также, что
$\widetilde{Q}(z)\not\in FMO(0.)$ Действительно, прямые вычисления
показывают, что при достаточно малых $0<\varepsilon<\varepsilon_0$ и
некоторой постоянной $C>0$ выполнено $\int\limits_{\varepsilon<|z|<
{\varepsilon_0}}\frac{\widetilde{Q}(z)\, dm(z)} {\left(|z| \log
\frac{1}{|z|}\right)^2}=\omega_{n-1}\int\limits_{\varepsilon}^{\varepsilon_0}\frac{dr}
{r^{1+\beta}\log^2\frac{1}{r}}\ge \frac{C}{\varepsilon^{\beta/2}},$
откуда вытекает, что соотношение (\ref{equa17}) не выполнено;
следовательно, $\widetilde{Q}(z)\not\in FMO(0).$

Осталось показать, что найдётся открытое дискретное отображение $f$
с конечным искажением длины в ${\Bbb B}^2\setminus\{0\},$ которому
соответствуют построенные функции $\widetilde{Q}(z):=K_I(z, f)$ и
$Q(z):=D_f(z, 0).$ В самом деле, в силу
\cite[предложение~6.4]{RSY$_2$} найдётся гомеоморфизм $f:{\Bbb
B}^2\setminus\{0\}\rightarrow {\Bbb C}$ вида
$f(z)=e^{i\theta+\frac{1}{\beta}\left(\frac{r^{\beta}-1}{r^{\beta}}\right)},$
$z=re^{i\theta},$ которому соответствует выбранная функция
$\mu(z)=k(|z|)\frac{z}{\overline{z}},$ а также функции
$\widetilde{Q}(z):=K_I(z, f)$ и $Q(z):=D_f(z, 0),$ указанные выше.
Заметим, что $f$ -- гомеоморфизм класса $W_{loc}^{1, 2}({\Bbb
B}^2\setminus\{0\}),$ при этом, т.к. $K_I(z,
f)=\frac{1}{|z|^{\beta}}\in L_{loc}^{1}({\Bbb B}^2\setminus\{0\}),$
то $f^{\,-1}\in W_{loc}^{1, 2}(f({\Bbb B}^2\setminus\{0\}))$ (см.
\cite[следствие~2.3]{KO}). В таком случае, $f$ -- отображение с
конечным искажением длины (см. \cite[теоремы~8.1 и 8.6]{MRSY}, см.,
также, \cite[теоремы~4.6 и 6.10]{MRSY$_1$}), что и следовало
установить.

КОНТАКТНАЯ ИНФОРМАЦИЯ

\medskip
\noindent{{\bf Салимов Руслан Радикович, \\ Севостьянов Евгений
Александрович}
\\Институт
прикладной математики и механики НАН Украины \\
83 114 Украина, г. Донецк, ул. Розы Люксембург, д. 74, \\отдел
теории функций, раб. тел. (380) -- 62 -- 311 01 45, \\ 
e-mail: brusin2006@rambler.ru; ruslan623@yandex.ru}


\begin{thebibliography}{99}


\bibitem{AC$_1$}
Andreian Cazacu C.,  \emph{On the length-area dilatation},  Complex
Var. Theory Appl., \textbf{50}, no. 7--11, 2005, p. 765--776.

\bibitem{BGMV} Bishop C.J., Gutlyanskii V.Ya., Martio O., Vuorinen
M., \emph{On conformal dilatation in space}, Intern. Journ. Math.
and Math. Scie., \textbf{22}, 2003, p. 1397--1420.

\bibitem{Cr$_1$} Cristea M., \emph{Local homeomorphisms having local $ACL^n$
inverses}, Compl. Var. and Ellipt. Equat., \textbf{53}, no.1, 2008,
p. 77--99.

\bibitem{Fe} Федерер~Г., \emph{Геометрическая теория меры}, Москва,
Наука, 1987. -- 760 с.


\bibitem{GG} Gutlyanski\u{i} V.Ya. and Golberg A. \emph{On Lipschitz continuity of quasiconformal
mappings in space},  J. d' Anal. Math., \textbf{109}, 2009,
p.~233-–251.

\bibitem{GRSY} Gutlyanskii~V.~Ya., Ryazanov~V.~I., Srebro~U.,
Yakubov~E., \emph{The Beltrami Equation: A Geometric Approach},
Developments in Mathematics, vol. 26., New York etc.: Springer,
2012.

\bibitem{GR$_1$} Гутлянский~В.Я., Рязанов~В.И., \emph{Геометрическая и топологическая теория функций
и отображений}, Киев, Наукова думка, 2011. -- 425~с.

\bibitem{IM} Iwaniec T. and Martin G.,
\emph{Geometrical Function Theory and Non-Linear Analysis}, Oxford,
Claren\-don Press, 2001. -- 552 p.

\bibitem{IR} Игнатьев А. и Рязанов В., \emph{Конечное среднее колебание в теории
отображений},  Укр. матем. вестник, \textbf{2},  № 3,  2005, c.
395--417.

\bibitem{JN} John F., and Nirenberg L., \emph{On functions of bounded mean
oscillation}, Comm. Pure Appl. Math., \textbf{14}, 1961, p.
415--426.

\bibitem{KO} Koskela~P. and Onninen~J., \emph{Mappings of finite distortion: Capacity and modulus
inequalities}, \textbf{599}, 2006, p.~1--26.

\bibitem{MRSY} Martio O., Ryazanov V., Srebro U. and Yakubov
E.,  \emph{Moduli in Modern Mapping Theory}, New York: Springer
Science + Business Media, LLC, 2009.

\bibitem{MRSY$_1$} Martio~O., Ryazanov~V., Srebro~U. and Yakubov
~E., \emph{Mappings with finite length distortion},  J. d'Anal.
Math., \textbf{93}, 2004, p. 215--236.

\bibitem{Pon} Пономарёв~С.П., \emph{$N^{\,-1}$-свойство отображений и
условие $(N)$ Лузина}, Матем. заметки, \textbf{58}, 1995,
с.~411--418.

\bibitem{Re$_1$} Решетняк Ю.Г., \emph{Пространственные отображения с ограниченным
искажением}, Новосибирск: Наука, 1982.

\bibitem{Ri} Rickman S., \emph{Quasiregular mappings}, Results in Mathematic and
Related Areas (3), 26, Berlin, Springer-Verlag, 1993.

\bibitem{RSY$_2$}
Ryazanov V., Srebro U. and Yakubov E., \emph{Plane mappings with
dilatation dominated by func\-tions of bounded mean mean
oscillation},  Sib. Adv. in Math., \textbf{11}, no.~2, 2001,
p.~94--130.

\bibitem{RSY} Ryazanov V., Srebro U. and Yakubov E., \emph{On ring solutions of Beltrami
equations},  J. d'Anal. Math., \textbf{96}, 2005,  p. 117--150.

\bibitem{Sa} Сакс~С.,  \emph{Теория интеграла}, М.,
Издательство иностранной литературы, 1949. -- 495 с.

\bibitem{SS} Салимов~Р.Р., Севостьянов~Е.А., \emph{Теория кольцевых $Q$-отоб\-ра\-же\-ний в
геометрической теории функций}, Матем. сборник, \textbf{201}, № 6,
с.~131--158.

\bibitem{Sev$_1$} Sevost'yanov~E.A., \emph{The V\"{a}is\"{a}l\"{a} inequality for
mappings with finite length distortion}, Complex Variables and
Elliptic Equations., \textbf{55}, no. 1--3, 2010, p. 91--101.

\bibitem{Sev$_2$} Севостьянов~Е.А., \emph{О локальном
поведении отображений с неограниченной характеристикой
квазиконформности}, Сиб. матем. ж., \textbf{53}, № 3, 2012, с.
648--662.

\bibitem{Sev$_3$} Севостьянов~Е.А., \emph{Теория модулей, ёмкостей и нормальные семейства
отображений, допускающих ветвление}, Укр. матем. вестник,
\textbf{4}, № 4, 2007, с.~582--604.

\bibitem{Sev$_4$} Севостьянов~Е.А., \emph{Обобщение одной леммы Е.А. Полецкого на классы пространственных
отображений}, Укр. матем. ж., \textbf{61}, № 7, 2009, с. 969--975.

\bibitem{SevSal} Севостьянов~Е.А., Салимов~Р.Р.,
\emph{О внутренних дилатациях отображений с неограниченной
характеристикой}, Укр. матем. вестник, \textbf{8}, № 1, 2011,
с.~129--143.

%
\bibitem{Va} V\"{a}is\"{a}l\"{a} J., \emph{Lectures on $n$--Dimensional Quasiconformal
Mappings},  Lecture Notes in Math. 229. -- Berlin etc.:
Springer--Verlag, 1971.

\bibitem{Va$_1$} V\"{a}is\"{a}l\"{a} J., \emph{Modulus and capacity inequalities for quasiregular
mappings}, Ann. Acad. Sci. Fenn. Ser. A 1 Math., \textbf{509}, 1972,
p. 1--14.

\end{thebibliography}
\end{document}